\numberwithin{equation}{section}
\title{Brauer group of moduli spaces of pairs}
\date{6 June 2010. Revised: 15 December 2010}
\author[I. Biswas]{Indranil Biswas}
\address{School of Mathematics, Tata Institute of Fundamental
Research, Homi Bhabha Road, Bombay 400005, India}
\email{indranil@math.tifr.res.in}
\author[M. Logares]{Marina Logares}
\address{Instituto de Ciencias Matem\'aticas (CSIC-UAM-UC3M-UCM),
Serrano 113bis, 28006 Madrid, Spain}
\email{marina.logares@icmat.es}
\author[V. Mu\~{n}oz]{Vicente Mu\~{n}oz}
\address{Facultad de Matem\'aticas, Universidad Complutense de Madrid,
Plaza Ciencias 3, 28040 Madrid Spain}
\email{vicente.munoz@mat.ucm.es}
\subjclass[2000]{14D20, 14F22, 14E08}
\keywords{Brauer group, moduli of pairs, stable bundles, complex curve.}
\DeclareMathOperator{\rk}{rk\,}          
\DeclareMathOperator{\Ext}{Ext}       
\DeclareMathOperator{\Mat}{Mat}
\DeclareMathOperator{\Hom}{Hom}
\DeclareMathOperator{\Br}{Br}
\DeclareMathOperator{\Jac}{Jac}
\DeclareMathOperator{\codim}{codim}
\newcommand{\hra}{\hookrightarrow}
\newcommand{\cF}{\mathcal{F}}
\newcommand{\cO}{\mathcal{O}}
\newcommand{\cS}{\mathcal{S}}
\newcommand{\cU}{\mathcal{U}}
\newcommand{\PP}{\mathbb{P}} 
\newcommand{\RR}{\mathbb{R}} 
\newcommand{\GG}{\mathbb{G}} 
\newcommand{\ZZ}{\mathbb{Z}} 
\newcommand{\lM}{\mathfrak{M}} 
\begin{document}

\newtheorem{thm}{Theorem}[section]
\newtheorem{prop}[thm]{Proposition}
\newtheorem{lem}[thm]{Lemma}
\newtheorem{cor}[thm]{Corollary}
\newtheorem{question}[thm]{Question}

\theoremstyle{definition}
\newtheorem{defn}[thm]{Definition}

\theoremstyle{remark}
\newtheorem{rmk}[thm]{Remark}

\theoremstyle{remark}
\newtheorem*{prf}{Proof}

\begin{abstract}
We show that the Brauer group of the moduli space of stable pairs with
fixed determinant over a curve is zero.
\end{abstract}

\maketitle
\section{Introduction}

Let $X$ be a smooth projective curve of genus $g\, \geq\, 2$ over the complex numbers.
A \textit{holomorphic pair} (also called a \textit{Bradlow pair}) is an object of the form
$(E,\phi)$, where $E$ is a holomorphic vector bundle over $X$,
and $\phi$ is a nonzero holomorphic section of
$E$. The concept of stability for pairs depends on a parameter $\tau\in \RR$.
Moduli spaces of $\tau$-stable pairs of fixed rank and degree
were first constructed using gauge theoretic
methods in \cite{bradlow-daskalopoulos:1991}, and subsequently
using Geometric Invariant Theory in \cite{bertram:1994}.
Since then these moduli spaces have been extensively studied.

Fix an integer $r\, \geq\, 2$ and a holomorphic line bundle $\Lambda$
over $X$. Let $d=\deg(\Lambda)$. Let
$\lM_{\tau}(r,\Lambda)$ be the
moduli space of stable pairs $(E,\phi)$ such that $\rk (E)\,=\,r$
and $\det (E)\, =\, \bigwedge^r E\,=\, \Lambda$. This
is a smooth quasi-projective variety; it is empty if $d \leq 0$. Therefore, $H^{2}_{\mathaccent 19 e
t}(\lM_{\tau}(r,\Lambda),\GG_m )$ is torsion, and it coincides with
the Brauer group of $\lM_{\tau}(r,\Lambda)$, defined by the equivalence classes of
Azumaya algebras over $\lM_{\tau}(r,\Lambda)$. Let
$\Br(\lM_{\tau}(r,\Lambda))$ denote the Brauer group of
$\lM_{\tau}(r,\Lambda)$.

We prove the following (see Theorem \ref{theorem1} and
Corollary \ref{cor:extra}):

\begin{thm} \label{main1}
 Assume that $(r,g,d)\neq (3,2,2)$. Then $\Br(\lM_{\tau}(r,\Lambda))\,=\, 0$.
\end{thm}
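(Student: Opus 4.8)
The plan is to reduce the vanishing of $\Br(\lM_\tau(r,\Lambda))$ to two purely cohomological statements about the smooth variety $M := \lM_\tau(r,\Lambda)$, and then to establish those by exploiting the dependence of the moduli space on the stability parameter $\tau$. First I would record the standard consequence of the Kummer sequence $0 \to \mu_n \to \GG_m \to \GG_m \to 0$ on a smooth complex variety $M$: passing to the limit over $n$ and using that $\Br(M)$ is torsion yields the exact sequence
$$ 0 \lto \operatorname{Pic}(M)\otimes \QQ/\ZZ \lto H^2(M,\QQ/\ZZ) \lto \Br(M) \lto 0. $$
Combining the universal coefficient isomorphism $H^2(M,\QQ/\ZZ)\cong (H^2(M,\ZZ)\otimes\QQ/\ZZ)\oplus H^3(M,\ZZ)_{\mathrm{tors}}$ with the exponential sequence shows that $\Br(M)=0$ provided (i) $H^2(M,\cO_M)=0$, which forces the Chern class map $\operatorname{Pic}(M)\to H^2(M,\ZZ)$ to be surjective and kills the first summand, and (ii) $H^3(M,\ZZ)$ is torsion free, which kills the second. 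Thus it suffices to prove (i) and (ii) for $M=\lM_\tau(r,\Lambda)$.

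Next I would analyze how these two groups behave as $\tau$ varies. For $\tau$ in the interior of a chamber the isomorphism class of $\lM_\tau(r,\Lambda)$ is constant, while crossing a critical value replaces the moduli space by a Thaddeus-type flip along loci that are built explicitly from lower-rank pairs together with extension data of the form $\Ext^1(\,\cdot\,,\,\cdot\,)$. I would estimate the codimension of these flip loci and show it is at least $2$ on both sides of every wall, \emph{except} precisely in the excluded case $(r,g,d)=(3,2,2)$. Since the coherent cohomology $H^2(\,\cdot\,,\cO)$ and the torsion subgroup of $H^3(\,\cdot\,,\ZZ)$ are insensitive to modifications in $\codim \ge 2$, this transfers (i) and (ii) across all walls and reduces their verification to a single, conveniently chosen chamber — an extremal value of $\tau$ where the geometry is explicit.

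Finally, in that extremal chamber I would identify $\lM_\tau(r,\Lambda)$ with a fibration whose fibres are projective spaces $\PP(H^0(X,E))$ over a smooth base with known cohomology; for small $\tau$ and $\gcd(r,d)=1$ this base is the moduli space $N(r,\Lambda)$ of stable bundles of fixed determinant, and in the general (possibly non-coprime) case one works at the large-$\tau$ end, where the extremal pairs are parametrised by an explicit smooth space so that the singular bundle moduli are avoided, with the locus where $\dim H^0(X,E)$ jumps again of high codimension. Using that $N(r,\Lambda)$ is smooth, simply connected and rational with $H^2(N,\cO_N)=0$ and torsion-free integral cohomology (Atiyah–Bott, Narasimhan–Ramanan, Harder–Narasimhan), together with the Leray spectral sequence of the fibration, I would deduce (i) and (ii) for the extremal $\lM_\tau(r,\Lambda)$, and hence for all $\tau$.

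The principal obstacle I anticipate lies in the wall-crossing step: obtaining uniform codimension $\ge 2$ bounds for every flip locus across all admissible $(r,g,d)$ is delicate, and it is exactly the failure of such a bound that isolates the exceptional triple $(3,2,2)$. The secondary technical point is the careful stratification of the extremal fibration according to the jumps of $\dim H^0(X,E)$, which must be shown to occur in high codimension so that the projective-fibration computation of $H^2(\cO)$ and of the torsion in $H^3(\ZZ)$ is unaffected.
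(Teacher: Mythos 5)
The central difficulty of the theorem is not addressed by your plan. Your reduction to (i) $H^2(M,\cO_M)=0$ and (ii) $H^3(M,\ZZ)_{\rm tors}=0$, followed by wall-crossing to an extremal chamber, founders at the last step: at the extremal ends the moduli space is a projective fibration over $M(r,\Lambda)$ (small $\tau$) or $M(r-1,\Lambda)$ (large $\tau$), and when the rank and degree of the base are not coprime these bases have \emph{nontrivial} Brauer group $\ZZ/\gcd(\cdot,d)$ --- computing it is precisely Corollary \ref{cor1} and \cite{balaji-biswas-gabber-nagaraj:2007}. So (i) and (ii) fail for the base, the fibrations $\pi_1,\pi_2$ are projective bundles that are \emph{not} projectivizations of vector bundles, and a Leray argument cannot simply quote torsion-freeness of the cohomology of $N(r,\Lambda)$ (Atiyah--Bott applies only in the coprime case). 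The paper's actual mechanism, which nothing in your proposal replaces, is the Gabber exact sequence $\ZZ\cdot{\rm cl}(\mathcal P)\to\Br(\text{base})\to\Br(\text{total})\to 0$ combined with the key identity ${\rm cl}({\mathcal P}_0)=-{\rm cl}({\mathcal P}_1)$ of Proposition \ref{pr-br}: the two extremal projective bundles over $M(r-1,\Lambda)$ --- one carrying $\lM_{\tau_M^-}(r,\Lambda)$, the other $\lM_{\tau_m^+}(r-1,\Lambda)$ --- therefore define the \emph{same} quotient of $\Br(M(r-1,\Lambda))$, which allows an induction on the rank down to the explicit case $r=2$ (a projective space) without ever needing to know $\Br(M(r-1,\Lambda))$ itself.

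There are also secondary gaps. Coherent cohomology $H^2(\cdot,\cO)$ of a quasi-projective variety is \emph{not} insensitive to removing a closed subset of codimension $2$ (local cohomology gives $H^i_Y=0$ only for $i<\codim Y$, so one would need codimension at least $4$); the wall-crossing transfer must be done directly on $\Br$ via purity (Theorem \ref{thm:purity}), as the paper does. The codimension bounds do not fail ``precisely at $(3,2,2)$'': they fail for $r=2$ at the wall $\tau_M-1$ and for the extremal loci in the cases $(2,2,\text{even})$ and $(3,2,\text{even})$; the triple $(3,2,2)$ is exceptional for a different reason (there $\dim\cU_m(3,\Lambda)<\dim M(3,\Lambda)$, so $\pi_1$ is generically an immersion, while the $\pi_2$-side runs into the codimension-one semistable boundary of $\overline{M}(2,\Lambda)$). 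Finally, for a non-proper $M$ the implication ``$H^2(M,\cO)=0\Rightarrow\operatorname{Pic}(M)\twoheadrightarrow H^2(M,\ZZ)$'' conflates analytic and algebraic line bundles without a GAGA theorem, so even the initial reduction to (i) and (ii) needs justification in this setting.
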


Let $M(r,\Lambda)$ be the moduli space of stable vector bundles
over $X$ of rank $r$ and determinant $\Lambda$. There is a unique
universal projective bundle over $X\times M(r,\Lambda)$. Restricting
this projective bundle to $\{x\}\times M(r,\Lambda)$, where $x$ is
a fixed point of $X$, we get a projective bundle ${\mathbb P}_x$ over
$M(r,\Lambda)$. We give a new proof of the following known result (see
Corollary \ref{cor1}).

\begin{cor} \label{main2}
 Assume $(r,g,d)\neq (2,2,\text{even})$.
 The Brauer group of $M(r,\Lambda)$ is generated by the Brauer class of ${\mathbb P}_x$.
\end{cor}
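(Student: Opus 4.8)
The plan is to deduce Corollary \ref{main2} from the main Theorem \ref{main1} by relating the moduli space of stable pairs $\lM_\tau(r,\Lambda)$ to the moduli space of stable bundles $M(r,\Lambda)$ through a suitable fibration or birational correspondence. The key geometric input is that for an appropriate value of the stability parameter $\tau$, a stable pair $(E,\phi)$ has underlying bundle $E$ stable, so that forgetting the section $\phi$ gives a morphism from (an open subset of) $\lM_\tau(r,\Lambda)$ to $M(r,\Lambda)$. The fibre over a stable bundle $E$ is the projectivization $\PP(H^0(X,E))$ of the space of its nonzero sections, since two sections differing by a nonzero scalar give isomorphic pairs.

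First I would fix $\tau$ in the range where stability of the pair forces stability (or at least semistability) of the bundle, and analyze the forgetful map $f\colon \lM_\tau(r,\Lambda) \dashrightarrow M(r,\Lambda)$. For $d$ large enough, or after twisting, one expects $H^0(X,E)$ to have constant dimension on a dense open subset $U\subseteq M(r,\Lambda)$, so that $f$ restricts to a genuine projective bundle $\PP(\mathcal{W}) \to U$ for some vector bundle $\mathcal{W}$ on $U$. I would then invoke the standard fact that for a projective bundle $\PP(\mathcal{W}) \to U$ associated to a vector bundle, the pullback map on Brauer groups $\Br(U) \to \Br(\PP(\mathcal{W}))$ is \emph{injective}, with kernel controlled by the Brauer class of the bundle; since here the projective bundle comes from an honest vector bundle, the relevant class is trivial and the pullback is injective. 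Combined with $\Br(\lM_\tau(r,\Lambda)) = 0$ from Theorem \ref{main1}, this forces the relevant piece of $\Br(U)$, and hence of $\Br(M(r,\Lambda))$, to be generated by the obstruction class to splitting the projective bundle.

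The crucial point is to identify that obstruction class with the Brauer class of $\PP_x$. The universal projective bundle on $X \times M(r,\Lambda)$ restricts to $\PP_x$ on $M(r,\Lambda)$, and the failure of the universal object to be an honest vector bundle (rather than merely a projective bundle) is exactly measured by $[\PP_x] \in \Br(M(r,\Lambda))$. The class $[\PP_x]$ is known to generate $\Br(M(r,\Lambda))$ by earlier work; the new contribution here is that the vanishing of $\Br(\lM_\tau(r,\Lambda))$ gives an independent derivation of this generation. I would argue that the vector bundle $\mathcal{W}$ on $U$ exists precisely as a twisted object, and that the twisting by $[\PP_x]$ is what obstructs descending the pair-moduli construction to a \emph{global} vector bundle over $M(r,\Lambda)$, so that the kernel of $\Br(M(r,\Lambda)) \to \Br(\lM_\tau(r,\Lambda))$ is exactly the subgroup generated by $[\PP_x]$.

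The main obstacle I anticipate is handling the excluded case $(r,g,d) \neq (2,2,\text{even})$ and, more seriously, the codimension estimates needed to pass between $\lM_\tau(r,\Lambda)$, the open set $U$, and all of $M(r,\Lambda)$: one must check that the locus where $H^0(X,E)$ jumps, as well as the unstable-bundle locus in the pairs moduli, has codimension at least $2$, so that restriction induces an isomorphism on Brauer groups (which are unaffected by removing high-codimension subsets of a smooth variety). Verifying these codimension bounds, and confirming that $\lM_\tau(r,\Lambda)$ and $M(r,\Lambda)$ are both smooth so that purity of the Brauer group applies, is where the real technical work lies; the exceptional triples are precisely those where a codimension count fails or the generic section space has the wrong dimension.
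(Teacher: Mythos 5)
Your overall strategy is the same as the paper's: work at $\tau=\tau_m^+$ with $d$ large, view the forgetful map $\pi_1\colon \cU_m(r,\Lambda)\to M(r,\Lambda)$ as the projective fibration with fibre $\PP(H^0(E))$, use the codimension estimate on $\cS_{\tau_m^+}$ together with purity to identify $\Br(\cU_m(r,\Lambda))$ with $\Br(\lM_{\tau_m^+}(r,\Lambda))=0$, and then conclude that the class of this projective bundle generates $\Br(M(r,\Lambda))$. However, there is a genuine error in the middle of your argument. You assert that the projective bundle ``comes from an honest vector bundle,'' so that the pullback $\Br(M(r,\Lambda))\to\Br(\PP(\cW))$ is injective. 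This is false, and if it were true it would prove too much: injectivity of the pullback combined with the vanishing of the target would force $\Br(M(r,\Lambda))=0$, which fails whenever $\gcd(r,d)>1$. The whole point is that $\cP_0$ is \emph{not} the projectivization of a vector bundle on $M(r,\Lambda)$ in general; the vector bundle $\cE_0=p_{2*}\cE^s$ lives on the Quot scheme and does not descend, because the centre of $\mathrm{GL}_N(\CC)$ acts on it with a nontrivial weight. The correct tool is Gabber's exact sequence for a Brauer--Severi scheme,
$$
\ZZ\cdot\mathrm{cl}(\cP_0)\lto \Br(M(r,\Lambda))\lto \Br(\cU_m(r,\Lambda))\lto 0\, ,
$$
whose kernel is generated by the (possibly nonzero) class $\mathrm{cl}(\cP_0)$. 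Your final sentence of that paragraph states the right conclusion, but it contradicts the injectivity claim; you need to discard the latter and invoke the exact sequence directly.

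A second, smaller gap: you assert that the obstruction class of $\cP_0$ equals $\mathrm{cl}(\PP_x)$, but this identification is not automatic and is where the paper does real work (Proposition \ref{pr-br}). The argument there is a weight computation on the Quot scheme: the centre $\CC^*$ acts with weight $+1$ on both $\cE_x$ and $\cE_0=p_{2*}\cE^s$ and with weight $-1$ on $\cE_1=R^1p_{2*}((\cE^s)^*)$, so the tensor products $\cE_x\otimes\cE_1$ and $\cE_0\otimes\cE_1$ descend, giving $\mathrm{cl}(\PP_x)=\mathrm{cl}(\cP_0)=-\mathrm{cl}(\cP_1)$. Without some such argument your proof only shows that $\Br(M(r,\Lambda))$ is generated by $\mathrm{cl}(\cP_0)$, not by $\mathrm{cl}(\PP_x)$. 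Finally, when you reduce to large $d$ by twisting, you should note that the isomorphism $M(r,\Lambda)\cong M(r,\Lambda\otimes\mu^r)$ carries $\mathrm{cl}(\PP_x)$ to $\mathrm{cl}(\PP_x)$, so the statement for large degree implies it for all degrees.
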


This was first proved in \cite{balaji-biswas-gabber-nagaraj:2007}.
We show that it follows as an application of Theorem \ref{main1}.

For convenience, we work over the complex numbers. However, the results
are still valid for any algebraically closed field of characteristic zero.

\medskip

\noindent \textbf{Acknowledgements.} We are grateful to
Norbert Hoffmann and Peter Newstead for helpful comments; specially,
Peter Newstead pointed out a mistake in a previous version of
Proposition \ref{prop:Um-UM}. We thank the referee for a careful reading and
useful comments. The second and third authors are grateful to the Tata Institute of Fundamental
Research (Mumbai), where this work was carried out, for its hospitality.
Second author supported by (Spanish MICINN) research project MTM2007-67623 and i-MATH. First and third author partially
supported by (Spanish MCINN) research project MTM2007-63582.

\section{Moduli spaces of pairs}

We collect here some known results about the moduli spaces of pairs, taken mainly from
\cite{bradlow-daskalopoulos:1991}, \cite{bradlow-garciaprada:1996},
\cite{munoz-ortega-vazquezgallo:2007}, \cite{munoz:2009} and \cite{thaddeus:1994}.

Let $X$ be a smooth projective curve defined over the field of complex
numbers, of genus $g\, \geq \, 2$.
A \emph{holomorphic pair} $(E,\phi)$ over $X$ consists of a holomorphic bundle on $X$ and a nonzero
holomorphic section
$\phi\in H^{0}(E)$. Let $\mu(E):=\deg(E)/\rk(E)$ be the slope of $E$. There is a stability concept for a pair
depending on a parameter $\tau\in \RR$.
A holomorphic pair $(E,\phi)$
is \emph{$\tau$-stable} whenever the following conditions are satisfied:
\begin{itemize}
  \item for any subbundle $E'\subset E$, we have $\mu(E')<\tau$,
  \item for any subbundle $E'\subset E$ such that $\phi\in H^0 (E')$, we have $\mu (E/E')>\tau$.
\end{itemize}
The concept of \emph{$\tau$-semistability} is defined by replacing the above strict inequalities
by the weaker inequalities ``$\le$'' and ``$\ge$''.
A \emph{critical value} of the parameter $\tau=\tau_c$ is one
for which there are strictly $\tau$-semistable pairs.
There are only finitely many critical values.

Fix an integer $r\, \geq\,2$ and
a holomorphic line bundle $\Lambda$ over $X$. Let $d$
be the degree of $\Lambda$.
We denote by $\lM_{\tau}(r,\Lambda)$ (respectively,
$\overline{\lM}_\tau(r,\Lambda)$)
the moduli space of $\tau$-stable (respectively, $\tau$-semistable)
pairs $(E,\phi)$ of rank $\rk(E)=r$ and determinant $\det(E)=\Lambda$.
The moduli space $\overline{\lM}_{\tau}(r,\Lambda)$ is
a normal projective variety, and $\lM_{\tau}(r,\Lambda)$ is
a smooth quasi-projective variety contained in the smooth locus
of $\overline{\lM}_{\tau}(r,\Lambda)$ (cf. \cite[Theorem 3.2]{munoz:2009}).
Moreover, $\dim \lM_\tau (r,\Lambda)= d+ (r^2-r-1)(g-1)-1$.

For non-critical values of the parameter, there
are no strictly $\tau$-semistable pairs, so
$\lM_{\tau}(r,\Lambda)\,=\,\overline{\lM}_{\tau}(r,\Lambda)$ and it is a smooth
projective variety. For a critical value $\tau_c$, the variety
$\overline{\lM}_{\tau_c}(r,\Lambda)$ is in general singular.

Denote  $\tau_m :=\frac{d}{r}$ and  $\tau_M:=\frac{d}{r-1}$. The moduli space
$\lM_\tau(r,\Lambda)$ is empty for $\tau \not\in (\tau_m,\tau_M)$. In particular, this forces $d>0$ for
$\tau$-stable pairs. Denote by
$\tau_1<\tau_2<\ldots <\tau_L$ the collection of all critical values in $(\tau_m,\tau_M)$.
Then the moduli spaces $\lM_\tau(r,\Lambda)$ are isomorphic for all values $\tau$ in
any interval $(\tau_i,\tau_{i+1})$, $i=0,\ldots, L$; here $\tau_0=\tau_m$ and
$\tau_{L+1}=\tau_M$.

However, the moduli space changes when we cross a critical value. Let $\tau_c$
be a critical value (note that for us, a critical value $\tau_c\neq \tau_m,\tau_M$).
Denote $\tau_{c}^{+}:=\tau_{c}+\epsilon$ and
$\tau_{c}^{-}:=\tau_{c}-\epsilon$ for $\epsilon >0$ small enough such that $(\tau_{c}^{-},\tau_{c}^{+})$ does not
contain any critical value other than $\tau_{c}$. We define the \emph{flip
loci} $\cS_{\tau^{\pm}_{c}}$ as the subschemes:
\begin{itemize}
  \item $\cS_{\tau^{+}_{c}}=\{(E,\phi)\in \lM_{\tau_c^{+}} (r,\Lambda)\, |
   \, (E,\phi)\; \text{is $\tau_{c}^{-}$-unstable}\}$,
  \item  $\cS_{\tau^{-}_{c}}=\{(E,\phi)\in \lM_{\tau_c^{-}}(r,\Lambda)\, | \,
    (E,\phi)\; \text{is $\tau_{c}^{+}$-unstable}\}$.
\end{itemize}
When crossing $\tau_{c}$, the variety $\lM_{\tau}(r,\Lambda)$ undergoes a birational transformation:
 $$
 \lM_{\tau^{-}_{c}}(r,\Lambda)\setminus
 \cS_{\tau_{c}^{-}}\,=\, \lM_{\tau_{c}}(r,\Lambda)=\lM_{\tau^{+}_{c}}(r,\Lambda)\setminus
 \cS_{\tau_{c}^{+}}\, .
 $$

\begin{prop}[{\cite[Proposition 5.1]{munoz:2008}}] \label{prop:codim}
  Suppose $r\ge2$, and let $\tau_c$ be a critical value with $\tau_m<\tau_c<\tau_M$. Then
  \begin{itemize}
    \item $\codim \cS_{\tau_{c}^{+}}\ge 3$ except in the case $r=2$, $g=2$, $d$ odd and $\tau_{c}=\tau_{m}+\frac{1}{2}$ (in which case $\codim \cS_{\tau_{c}^{+}}=2$),
    \item $\codim \cS_{\tau_{c}^{-}}\ge 2$ except in the case $r=2$ and $\tau_{c}=\tau_{M}-1$ (in which case $\codim \cS_{\tau_{c}^{-}}=1$). Moreover we have that $\codim \cS_{\tau_{c}^{-}}=2$ only for $\tau_{c}=\tau_{M}-2$.
  \end{itemize}
\end{prop}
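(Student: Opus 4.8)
The plan is to realize each flip locus as a union of strata parametrizing extensions, and to bound the codimension of every stratum by a Riemann--Roch dimension count. First I would record the local structure of the loci. Arguing from the two defining inequalities of $\tau$-stability, a pair $(E,\phi)\in\cS_{\tau_c^+}$ is $\tau_c^+$-stable but acquires a destabilizing subbundle $F\subset E$ with $\mu(F)=\tau_c$ and $\phi\notin H^0(F)$; dually, a pair $(E,\phi)\in\cS_{\tau_c^-}$ has a sub-pair $(E_1,\phi)$ whose pure quotient $E_2=E/E_1$ satisfies $\mu(E_2)=\tau_c$. Thus the members of $\cS_{\tau_c^+}$ (resp. $\cS_{\tau_c^-}$) are non-split extensions
\[
0\to F\to (E,\phi)\to (G,\psi)\to 0\qquad(\text{resp. }0\to (E_1,\phi)\to (E,\phi)\to E_2\to 0),
\]
in which the piece of slope $\tau_c$ carrying no section is the sub (resp. the quotient), and the complementary piece is a $\tau_c$-stable pair. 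I would then stratify each locus by the numerical type $(r_1,d_1,r_2,d_2)$ of the two pieces, subject to $r_1+r_2=r$, $d_1+d_2=d$, $\det E_1\otimes\det E_2=\Lambda$, and $d_i=r_i\tau_c$ on the pure piece.

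Next I would compute the dimension of a fixed stratum, which is swept out by a family over the product of the moduli of the pair-piece (of the appropriate rank, with determinant free to vary in $\Jac$) and the moduli of the pure bundle-piece (with determinant then pinned down by $\Lambda$), the fibre being the projectivization of the relevant group of extensions of pairs. For $\cS_{\tau_c^-}$ the quotient $E_2$ carries no section, so an extension of pairs is the same as a bundle extension and the fibre is $\PP\big(\Ext^1(E_2,E_1)\big)$ of dimension $h^1(E_2^\vee\otimes E_1)-1$. For $\cS_{\tau_c^+}$ the section lies in the quotient $(G,\psi)$, so the extension group of pairs is the first hypercohomology of the two-term complex of sheaves $\mathcal{H}om(G,F)\to F$ given by evaluation at $\psi$, and I read its dimension off the Euler characteristic of this complex. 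In both cases Riemann--Roch, together with the slope constraints forcing the relevant $\Hom/\HH^0$ and $\HH^2$ to vanish on the generic member, feeds into $\dim \lM_\tau(r,\Lambda)=d+(r^2-r-1)(g-1)-1$ and yields, after collecting terms,
\[
\codim \cS_{\tau_c^+}=ac(g-1)+a\,(r\tau_c-d),\qquad \codim \cS_{\tau_c^-}=r_2(r_1-1)(g-1)+\big(r_2 d-(r-1)d_2\big),
\]
where $a=\rk F$, $c=r-a$ in the first case. Both correction terms are positive integers, since $\tau_m<\tau_c<\tau_M$ gives $r\tau_c-d>0$ and $r_2 d-(r-1)d_2>0$.

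I would then minimize these expressions over all admissible types. For $\cS_{\tau_c^+}$ the product $ac(g-1)$ forces $\codim\ge 3$ unless $a=c=1$, i.e.\ $r=2$; there the bound degenerates to $(g-1)+(2\tau_c-d)$, whose minimum value $2$ is attained exactly when $g=2$, $d$ is odd and $\tau_c=\tau_m+\tfrac12$, reproducing the first bullet. The asymmetry of the leading coefficient in the second formula --- $r_2(r_1-1)$ rather than $r_1r_2$, caused by the pair-piece moduli carrying the extra $-1$ and by the extension group being smaller when the section sits in the sub --- is what allows $\cS_{\tau_c^-}$ to be smaller: taking $r_1=1$ kills the leading term and leaves $\codim \cS_{\tau_c^-}=(r-1)d_1$, which equals $1$ only for $(r,d_1)=(2,1)$, i.e.\ $\tau_c=\tau_M-1$, and is the relevant source of the low-codimension exceptions in the second bullet.

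The hard part will be upgrading these counts from inequalities to the exact values needed to isolate the exceptional walls. Two points require care. First, \emph{exactness}: I must show that for each admissible type a generic extension is genuinely $\tau_c^{\pm}$-stable in the correct chamber, and that the destabilizing subobject is unique on a dense subset, so that the family dominates the stratum without overcounting and the stratum has precisely the computed dimension. Second, the \emph{extreme strata}: when a rank is $1$ the corresponding moduli space is a symmetric product $\mathrm{Sym}^{d_i}X$ or a translate of $\Jac$ rather than a product, and for small degrees the vanishing of $\Hom$ and of $h^0$ used in the Riemann--Roch step can fail, so these cases must be examined individually. Carrying out this bookkeeping over the finitely many admissible types, and matching it against the ranges imposed by $\tau_c\in(\tau_m,\tau_M)$, is what pins the minimal codimensions at $3$, $2$ and $1$ and singles out exactly the listed exceptional values of $(r,g,d,\tau_c)$.
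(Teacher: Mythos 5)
The paper offers no proof of this proposition: it is quoted verbatim from \cite[Proposition 5.1]{munoz:2008}, so there is no internal argument to compare against. Your strategy --- identifying the destabilizing subobject forced by wall-crossing (a subbundle of slope $\tau_c$ not containing $\phi$ for $\cS_{\tau_c^+}$, a sub-pair with pure quotient of slope $\tau_c$ for $\cS_{\tau_c^-}$), fibring each stratum over the product of the moduli of the two pieces with fibre a projectivized extension space, computing the fibre dimension from the hypercohomology of the complex $\mathcal{H}om(G,F)\to F$, and minimizing over numerical types --- is exactly the method of the cited source. I have checked that your two formulas, $\codim\cS_{\tau_c^+}=ac(g-1)+a(r\tau_c-d)$ and $\codim\cS_{\tau_c^-}=r_2(r_1-1)(g-1)+\bigl(r_2d-(r-1)d_2\bigr)$, do follow from Riemann--Roch together with $\dim\lM_\tau(r,\Lambda)=d+(r^2-r-1)(g-1)-1$ \emph{on the generic stratum}, and that minimizing the first over $a+c=r$ reproduces the first bullet exactly (for $ac\ge2$ one gets $\ge2(g-1)+1\ge3$; for $r=2$ one gets $(g-1)+(2e-d)$, equal to $2$ precisely when $g=2$, $d$ odd, $\tau_c=\tau_m+\tfrac12$).

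Two genuine gaps remain. First, everything you defer to ``the hard part'' is where the content lies: when $\Hom(E_2,E_1)$, $\HH^0$ or $\HH^2$ fails to vanish the fibre dimension \emph{jumps up}, so your formulas are a priori only the codimension of the open stratum where the vanishing holds; to get a lower bound on $\codim\cS_{\tau_c^\pm}$ you must show that the locus of pieces with jumping cohomology is small enough to compensate, and this stratified bookkeeping (of exactly the kind carried out at length in the proof of Corollary \ref{cor:extra} for a single special case) is not optional. Second, your own formula for $\cS_{\tau_c^-}$ contradicts the final clause of the second bullet if that clause is read for all $r$: for $r=3$, $g=2$, $r_1=2$, $r_2=1$ it gives codimension $1+(d-2\tau_c)$, which equals $2$ at $\tau_c=\tau_M-\tfrac12$ (e.g.\ $d=5$, $\tau_c=2$), not at $\tau_M-2$; similarly $r_1=1$, $r_2=2$ gives codimension $2d_1=2$ at $\tau_c=\tau_M-\tfrac12$. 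So the ``moreover'' clause must be construed as part of the $r=2$ case (where your computation does confirm $\codim=\tau_M-\tau_c$, hence $=2$ exactly at $\tau_c=\tau_M-2$), and your claim that ``the second bullet'' follows from taking $r_1=1$ skips the case analysis needed to see this. Since the paper only ever uses $\codim\ge2$ for $r\ge3$ and the two $r=2$ exceptional walls, neither issue affects the applications, but both must be settled before the proposition as stated is actually proved.
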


The codimension of the flip loci is then always positive, hence we have the following corollary:

\begin{cor} \label{cor:birationality}
  The moduli spaces $\lM_{\tau}(r,\Lambda)$,
$\tau\in (\tau_{m},\tau_{M})$, are birational.
\end{cor}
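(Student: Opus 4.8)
The plan is to reduce everything to the single wall-crossing picture recorded just before the statement and then chain the resulting identifications together. First I would observe that for $\tau$ and $\tau'$ lying in a common open interval $(\tau_i,\tau_{i+1})$ between consecutive critical values, the moduli spaces $\lM_\tau(r,\Lambda)$ and $\lM_{\tau'}(r,\Lambda)$ are isomorphic, hence trivially birational. Thus the entire problem reduces to comparing the two moduli spaces on either side of a single critical value $\tau_c$.

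The key step is to exploit the wall-crossing identity
$$
\lM_{\tau_c^-}(r,\Lambda)\setminus \cS_{\tau_c^-} \,=\, \lM_{\tau_c}(r,\Lambda) \,=\, \lM_{\tau_c^+}(r,\Lambda)\setminus \cS_{\tau_c^+}\, .
$$
By Proposition \ref{prop:codim}, each flip locus $\cS_{\tau_c^{\pm}}$ has strictly positive codimension in the corresponding moduli space; indeed $\codim \cS_{\tau_c^+}\ge 2$ and $\codim \cS_{\tau_c^-}\ge 1$ in all cases. Since $\lM_{\tau_c^{\pm}}(r,\Lambda)$ is smooth and irreducible, the complement of a closed subset of positive codimension is a \emph{dense} open subset. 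Hence $\lM_{\tau_c^-}\setminus \cS_{\tau_c^-}$ and $\lM_{\tau_c^+}\setminus \cS_{\tau_c^+}$ are dense open subsets of $\lM_{\tau_c^-}(r,\Lambda)$ and $\lM_{\tau_c^+}(r,\Lambda)$ respectively, and the displayed equalities identify both of them with the common variety $\lM_{\tau_c}(r,\Lambda)$. An isomorphism between dense open subsets is exactly a birational equivalence, so $\lM_{\tau_c^-}(r,\Lambda)$ and $\lM_{\tau_c^+}(r,\Lambda)$ are birational.

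Finally I would invoke the finiteness of the set of critical values $\tau_1<\cdots<\tau_L$ in $(\tau_m,\tau_M)$. Given arbitrary $\tau,\tau'\in(\tau_m,\tau_M)$, moving from $\tau$ to $\tau'$ crosses only finitely many walls; between walls the moduli space is fixed up to isomorphism, and at each wall the previous paragraph supplies a birational equivalence. Composing these finitely many maps and using that birational equivalence is transitive yields a birational map $\lM_\tau(r,\Lambda)\dashrightarrow \lM_{\tau'}(r,\Lambda)$. I do not expect any genuine obstacle here: all the real content is already packaged in Proposition \ref{prop:codim}, and the corollary is essentially formal. The only mild point to confirm is the irreducibility of the moduli spaces, which guarantees that the open subsets identified with $\lM_{\tau_c}(r,\Lambda)$ are genuinely dense and is standard for these spaces.
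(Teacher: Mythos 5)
Your argument is correct and is exactly the paper's reasoning: the paper derives the corollary in one line from the wall-crossing identity together with Proposition \ref{prop:codim}, which shows the flip loci always have positive codimension, and you have simply spelled out the (routine) density and chaining steps. No issues.
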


The moduli spaces for the extreme values of the parameter $\tau_{m}^{+}$ and
$\tau_{M}^{-}$ are known explicitly.
Let $M(r,\Lambda)$ be the moduli space of \textit{stable} vector bundles or rank $r$ and
fixed determinant $\Lambda$. Define
\begin{equation}\label{eq.ll}
  \cU_m(r,\Lambda)=\{ (E,\phi) \in \lM_{\tau_m^+}(r,\Lambda) \, | \, \text{$E$ is a stable vector bundle}\}\, ,
\end{equation}
and denote
  $$
  \cS_{\tau_m^+}:= \lM_{\tau_{m}^{+}}(r,\Lambda)\setminus \cU_m(r,\Lambda) \,
  $$
(not to be confused with the definition above for $\cS_{\tau_c^\pm}$, which refers only to critical values
$\tau_c\neq \tau_m,\tau_M$). 
Then there is a map
 \begin{equation}\label{eq:ftaum}
 \pi_1:\cU_m  (r,\Lambda)\longrightarrow M(r,\Lambda),\qquad (E,\phi)\mapsto E\, ,
 \end{equation}
whose fiber over $E$ is the projective space $\PP(H^{0}(E))$.
When $d \geq r(2g-2)$, we have
that $H^1(E)=0$ for any stable bundle, and hence (\ref{eq:ftaum}) is a projective bundle
(cf.\ \cite[Proposition 4.10]{munoz-ortega-vazquezgallo:2007}).

\medskip

Regarding the rightmost moduli space $\lM_{\tau_{M}^{-}}(r,\Lambda)$, we have that
any $\tau_M^-$-stable pair $(E,\phi)$ sits in an exact sequence
 $$
 0\longrightarrow \cO \stackrel{\phi}{\longrightarrow} E\longrightarrow F\longrightarrow 0\, ,
 $$
where $F$ is a semistable bundle of rank $r-1$ and $\det (F)=\Lambda$. Let
 $$
 \cU_M(r,\Lambda)=\{ (E,\phi) \in \lM_{\tau_M^-}(r,\Lambda) \, | \, \text{$F$
is a stable vector bundle}\}\, ,
  $$
and denote
  $$
  \cS_{\tau_M^-}:= \lM_{\tau_M^-} (r,\Lambda)\setminus \cU_M (r,\Lambda)\, .
  $$
Then there is a map
\begin{equation}\label{eq:ftauM}
 \pi_2: \cU_M(r,\Lambda) \longrightarrow  M(r-1,\Lambda), \qquad (E,\phi) \mapsto E/\phi(\cO) \, ,
\end{equation}
whose fiber over $F \in M(r-1,\Lambda)$ is the
projective spaces $\PP(H^1(F^\ast))$
(cf. \cite{bradlow-garciaprada:2002}). Note
that $H^0(F^\ast)=0$ since $d>0$.
So (\ref{eq:ftauM}) is always a projective bundle.

In the particular case of rank $r=2$,
the rightmost moduli space is
  \begin{equation}\label{eq:r2tauM}
  \lM_{\tau_{M}^{-}}(2,\Lambda)=\PP(H^{1}(\Lambda^{-1}))\, ,
  \end{equation}
since $M(1,\Lambda)=\{\Lambda\}$.
In particular, Corollary \ref{cor:birationality} shows that all
$\lM_{\tau}(2,\Lambda)$ are rational quasi-projective varieties.

We have the following:
\begin{lem}[{\cite[Lemma 5.3]{munoz:2009}}] \label{lem:codim-semistable}
Let $S$ be a bounded family of isomorphism classes of strictly semistable
bundles of rank $r$ and determinant $\Lambda$. Then $\dim M (r, \Lambda)
 - \dim S \geq (r - 1)(g - 1)$.
\end{lem}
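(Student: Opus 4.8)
The plan is to prove the equivalent bound $\dim S\le r(r-1)(g-1)$: the moduli space $M(r,\Lambda)$ of stable bundles has the classical dimension $(r^2-1)(g-1)$, and $(r^2-1)(g-1)-r(r-1)(g-1)=(r-1)(g-1)$, so this inequality is exactly the assertion. First I would exploit boundedness of $S$ to reduce to finitely many cases. Every strictly semistable $E$ of slope $\mu=d/r$ contains a stable subbundle $E_1$ with $\mu(E_1)=\mu$ (for instance the first term of a Jordan--H\"older filtration), of some rank $r_1$ with $1\le r_1\le r-1$ and degree $d_1=r_1 d/r$; only finitely many such numerical types occur, so it suffices to bound the dimension of each stratum $S_{r_1}\subseteq S$ on which the type of the chosen destabilizing subbundle is fixed.

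Next I would parametrize $S_{r_1}$ by extensions. Setting $E_2=E/E_1$, a semistable bundle of slope $\mu$ and rank $r_2=r-r_1$, every $E$ in the stratum arises from a triple $(E_1,E_2,[e])$ with $E_1$ stable, $E_2$ semistable, and $[e]\in\PP(\Ext^1(E_2,E_1))$, and the resulting map to $S_{r_1}$ is surjective; hence $\dim S_{r_1}$ is at most the dimension of this space of triples. The constraint $\det E_1\otimes\det E_2=\Lambda$ lets $\det E_1$ vary over $\Jac^{d_1}(X)$ (dimension $g$) and then determines $\det E_2$, so $E_1$ contributes $g+(r_1^2-1)(g-1)$ and $E_2$ contributes $(r_2^2-1)(g-1)$. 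By Riemann--Roch, $\chi(E_2,E_1)=r_2 d_1-r_1 d_2+r_1 r_2(1-g)=-r_1 r_2(g-1)$, the degree terms cancelling because $E_1$ and $E_2$ have the same slope; thus $\dim\Ext^1(E_2,E_1)=\dim\Hom(E_2,E_1)+r_1 r_2(g-1)$. On the open locus where $\Hom(E_2,E_1)=0$, adding up (and subtracting $1$ for the projectivization of the extension class) gives $(g-1)(r^2-r_1 r_2-1)$, which is the target $r(r-1)(g-1)$ minus the nonnegative slack $(g-1)(r_1-1)(r_2-1)$. So the desired bound holds on this locus, with equality precisely when $r_1=1$ or $r_2=1$.

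The main obstacle is the locus where $\Hom(E_2,E_1)\neq0$, because the extra summand $\dim\Hom(E_2,E_1)$ in $\dim\Ext^1$ threatens to push the count above $r(r-1)(g-1)$; already for $r_1=1$ the open estimate is sharp, so no slack is available there. I would resolve this by a compensating estimate. Since $E_1$ is stable of slope $\mu$, comparing $\Hom(-,E_1)$ along a Jordan--H\"older filtration of $E_2$ shows $\dim\Hom(E_2,E_1)\le m$, where $m$ is the multiplicity of $E_1$ among the Jordan--H\"older factors of $E_2$. But forcing $E_1$ to occur with multiplicity $\ge m$ makes $E_2$ an iterated extension of those $m$ copies of $E_1$ by a semistable bundle of rank $r_2-m r_1$, and this drops the dimension of the admissible family of $E_2$ by far more than the at most $m$ gained in $\dim\Ext^1$ (for $r_1=1$, $m\ge1$, the family of $E_2$ already falls by about $r_2(g-1)$ while $\Ext^1$ rises by at most $1$). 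Making this trade-off precise by a short recursion on $m$ then yields $\dim S_{r_1}\le r(r-1)(g-1)$ on every stratum, and therefore $\dim S\le r(r-1)(g-1)$, as required.
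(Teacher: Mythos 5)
The paper does not prove this lemma at all: it is quoted verbatim from \cite[Lemma 5.3]{munoz:2009}, so there is no internal proof to compare against. Your argument is the standard Jordan--H\"older/extension count that the cited reference uses, and the core of it is correct: parametrizing a strictly semistable $E$ by a stable subbundle $E_1$ of the same slope, the quotient $E_2$, and a class in $\PP(\Ext^1(E_2,E_1))$, and computing $\chi(E_2,E_1)=-r_1r_2(g-1)$ (the degree terms cancel by equality of slopes), does give the bound $(g-1)(r^2-r_1r_2-1)\le r(r-1)(g-1)$ on the locus where $\Hom(E_2,E_1)=0$, with the slack $(g-1)(r_1-1)(r_2-1)$ exactly as you say. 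The one place where your write-up stays at the level of a sketch is the locus $\Hom(E_2,E_1)\neq 0$, which is genuinely the delicate point since no slack is available when $r_1=1$. Your compensating mechanism is the right one and does check out numerically (e.g.\ for $r_1=1$ the condition that $E_2$ surjects onto $E_1$ already costs at least $r_2(g-1)+1$ dimensions in the family of $E_2$ against a gain of at most $1$ in $\dim\Ext^1$, and the extreme case $E_2\cong E_1^{\oplus r_2}$ with $E_1^{r}\cong\Lambda$ contributes only $(r-1)g-1\le r(r-1)(g-1)$), but the ``short recursion on $m$'' is asserted rather than carried out, and it interacts with the induction on rank needed to bound the family of strictly semistable $E_2$ themselves. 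So: correct strategy, correct generic computation, and a believable but not fully written-out treatment of the degenerate stratum.
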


\begin{prop} \label{prop:Um-UM}
 The following two statements hold:
   \begin{itemize}
    \item Suppose $d>r(2g-2)$. Then $\codim \cS_{\tau_{m}^{+}}\ge 2$ except in the case $r=2$,
$g=2$, $d$ even
     (in which case $\codim \cS_{\tau_{m}^{+}}=1$).
    \item Suppose $r\geq 3$. Then
    $\codim \cS_{\tau_{M}^{-}}\ge 2$ except in the case $r=3$, $g=2$,
$d$ even
     (in which case the $\codim \cS_{\tau_{M}^{-}}=1$).
  \end{itemize}
\end{prop}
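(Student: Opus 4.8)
The plan is to realise each flip locus as a family fibred over a locus of strictly semistable bundles, with projective-space fibres, and then to bound its dimension by combining the fibre dimension with Lemma \ref{lem:codim-semistable}.

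First I would identify the loci. Since $\tau_m^+$-stability forces $E$ to be semistable (a destabilising subbundle would have slope exceeding $\tau_m+\epsilon$ for small $\epsilon$), and $\cU_m$ consists of those pairs with $E$ stable, the locus $\cS_{\tau_m^+}$ is exactly the set of pairs $(E,\phi)$ with $E$ strictly semistable. Likewise $\cS_{\tau_M^-}$ is the set of pairs whose quotient $F=E/\phi(\cO)$ is strictly semistable. The maps $(E,\phi)\mapsto E$ and $(E,\phi)\mapsto F$ then fibre $\cS_{\tau_m^+}$ (respectively $\cS_{\tau_M^-}$) over the locus of strictly semistable bundles of rank $r$ (respectively $r-1$) and determinant $\Lambda$, with fibre contained in $\PP(H^0(E))$ (respectively $\PP(H^1(F^\ast))$).

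Next I would make the two dimension counts. For the fibres: in the first case the hypothesis $d>r(2g-2)$ gives $H^1(E)=0$ for every semistable $E$, so $\dim H^0(E)=d-r(g-1)$ is constant and the fibre has dimension at most $d-r(g-1)-1$; in the second case $H^0(F^\ast)=0$ since $d>0$, so $\dim H^1(F^\ast)=d+(r-1)(g-1)$ and the fibre has dimension at most $d+(r-1)(g-1)-1$. For the bases, Lemma \ref{lem:codim-semistable} together with $\dim M(r,\Lambda)=(r^2-1)(g-1)$ bounds the strictly semistable locus in rank $r$ by $r(r-1)(g-1)$, and in rank $r-1$ by $(r-1)(r-2)(g-1)$. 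Adding fibre and base and subtracting from $\dim\lM_\tau(r,\Lambda)=d+(r^2-r-1)(g-1)-1$, the terms in $d$ cancel and a short computation gives
\begin{equation*}
\codim\cS_{\tau_m^+}\ge (r-1)(g-1),\qquad \codim\cS_{\tau_M^-}\ge (r-2)(g-1).
\end{equation*}
Each bound is $\ge 2$ unless the relevant product equals $1$, that is, unless $r=g=2$ in the first case, or $r=3,\,g=2$ in the second.

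The delicate point, and the step I expect to be the main obstacle, is the analysis of precisely these boundary cases, which is where the parity of $d$ must enter (and presumably where the error noted in the acknowledgements lay). When $r=2$, or when the quotient has rank $r-1=2$, and $d$ is \emph{odd}, no strictly semistable bundle of the relevant rank and determinant exists, since a destabilising sub-line-bundle would have degree $d/2\notin\ZZ$; hence the flip locus is empty and its codimension is vacuously $\ge 2$, removing the odd-$d$ subcase from the exceptions. When $d$ is \emph{even} the strictly semistable locus is nonempty, and to pin the codimension down to exactly $1$ one must verify that Lemma \ref{lem:codim-semistable} is sharp here: the generic such bundle is $S$-equivalent to $L\oplus\Lambda L^{-1}$ with $\deg L=d/2$, giving a family of the expected dimension $r(r-1)(g-1)$ (respectively $(r-1)(r-2)(g-1)$), which together with the now constant fibre dimension yields $\codim=1$ precisely in the stated cases.
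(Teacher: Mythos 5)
Your proposal is correct and follows essentially the same route as the paper: fibre each flip locus over the corresponding locus of strictly semistable bundles, observe that $\dim H^0(E)$ (resp.\ $\dim H^1(F^\ast)$) is constant so the fibre dimension does not change, and invoke Lemma \ref{lem:codim-semistable} to get $\codim\cS_{\tau_m^+}\ge (r-1)(g-1)$ and $\codim\cS_{\tau_M^-}\ge (r-2)(g-1)$. Your explicit treatment of the boundary cases (emptiness of the rank-$2$ strictly semistable locus for $d$ odd, and the sharpness computation for $d$ even) is actually more detailed than the paper's own proof, which leaves these points implicit.
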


\begin{proof}
Let $(E,\phi)\in \lM_{\tau_m^+}(r,\Lambda)$, then $E$ is a semistable
bundle. As $d>r(2g-2)$, $H^1(E)=H^0(E^*\otimes K_X)^*=0$, since
$E^*\otimes K_X$ is semistable and has negative degree. Therefore,
$\dim H^0(E)$ is constant. Let $\cF$ be the family of strictly
semistable bundles $E$ such that there exists some $\phi$ with
$(E,\phi)
\in \cS_{\tau_{m}^{+}}$. Then $\codim \cS_{\tau_m^+} =
\dim \lM_{\tau_m^+}(r,\Lambda) -
\dim \cS_{\tau_m^+} \geq \dim M(r,\Lambda) - \dim \cF \ge (r-1)(g-1)$
(by
Lemma \ref{lem:codim-semistable}). The first statement follows.

As the dimension $\dim H^1(F^\ast)$ is constant, the codimension of $\cS_{\tau_M^-}$ in
$\lM_{\tau_M^-}(r,\Lambda)$ is at least
the codimension of a locus of semistable bundles. Applying Lemma
\ref{lem:codim-semistable} to $M (r-1, \Lambda)$ we have
$\codim \cS_{\tau_M^-} \geq (r-2)(g-1)$. The second item follows.
\end{proof}

\section{Brauer group}

The Brauer group of a scheme $Z$ is defined as the equivalence classes of Azumaya algebras on $Z$,
that is, coherent locally free sheaves with algebra structure such that,
locally on the \'etale topology of $Z$, are
isomorphic to a matrix algebra $\Mat(\cO_Z)$. If $Z$ is a smooth quasi-projective variety, then
the Brauer group $\Br(Z)$ coincides with $H^{2}_{\mathaccent 19 e
t}(Z)$, and $H^{2}_{\mathaccent 19 e t}(Z)$ is a torsion group.

\begin{thm}\cite[VI.5 (Purity)]{milne:1980}\label{thm:purity}
Let $Z$ be a smooth complex variety and $U\subset Z$ be a Zariski open subset
whose complement has codimension at least $2$. Then $\Br(Z)\,=\,\Br(U)$.
\end{thm}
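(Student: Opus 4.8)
The plan is to prove the equivalent cohomological statement $H^2_{\text{\'et}}(Z,\GG_m)\,=\,H^2_{\text{\'et}}(U,\GG_m)$; since $Z$ is smooth, hence regular, these cohomological Brauer groups coincide with the Azumaya Brauer groups, as recalled just above the statement. Writing $W\,=\,Z\setminus U$ for the closed complement, I would run the long exact localization sequence for \'etale cohomology with supports in $W$,
\begin{equation*}
\cdots \lto H^i_W(Z,\GG_m) \lto H^i(Z,\GG_m) \lto H^i(U,\GG_m) \lto H^{i+1}_W(Z,\GG_m) \lto \cdots ,
\end{equation*}
so that the sought isomorphism in degree $2$ follows once I show $H^2_W(Z,\GG_m)\,=\,H^3_W(Z,\GG_m)\,=\,0$. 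The problem is thereby reduced to a vanishing statement for local cohomology supported on a subset of codimension at least $2$.

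To compute these local groups I would replace the non-torsion sheaf $\GG_m$ by finite coefficients. The two lowest terms are immediate: $H^0_W(Z,\GG_m)\,=\,0$ because a global unit that restricts trivially to the dense open $U$ must itself be trivial, and $H^1_W(Z,\GG_m)\,=\,0$ because removing from the regular variety $Z$ a closed subset of codimension $\ge 2$ changes neither its units nor its Picard group. For the higher terms I would invoke the Kummer sequence $1\to\mu_n\to\GG_m\to\GG_m\to 1$ which, together with the fact that the groups involved are torsion (as noted in the excerpt), reduces the vanishing of $H^2_W(Z,\GG_m)$ and $H^3_W(Z,\GG_m)$ to that of $H^i_W(Z,\mu_n)$ for $i\le 3$ and all $n$.

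For finite coefficients the tool is cohomological purity. Stratifying $W$ into smooth locally closed pieces and proceeding by d\'evissage along the codimension filtration, it suffices to treat the case where $W$ is smooth of some codimension $c\ge 2$. Then the Gysin isomorphism $H^i_W(Z,\mu_n)\,\cong\,H^{i-2c}(W,\mu_n^{\otimes(1-c)})$ shows that $H^i_W(Z,\mu_n)$ vanishes for $i<2c$; since $c\ge 2$ this covers every $i\le 3$. Feeding the result back through the Kummer sequence gives $H^2_W(Z,\GG_m)\,=\,H^3_W(Z,\GG_m)\,=\,0$, and hence $\Br(Z)\,=\,\Br(U)$.

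The hard part is the cohomological purity (Gysin) isomorphism for finite coefficients: this is exactly where the smoothness of $Z$ is indispensable, and it carries the real content of the theorem. The d\'evissage needed to pass from a possibly singular $W$ to its smooth strata, and the bookkeeping in transporting the vanishing from $\mu_n$ back to $\GG_m$, are the remaining points requiring care. A cleaner but essentially equivalent route is to use that for a regular integral scheme the Brauer group embeds into that of the function field $K(Z)$, with image the classes unramified at every codimension-one point; since $Z$ and $U$ share the same codimension-one points, the two images coincide at once and $\Br(Z)\,=\,\Br(U)$.
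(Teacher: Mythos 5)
The paper does not actually prove this statement: it is quoted verbatim from Milne \cite{milne:1980}, VI.5, so there is no internal argument to compare against. What you have written is, in substance, the standard proof of that cited result (Grothendieck's purity for the Brauer group): the localization sequence with supports in $W=Z\setminus U$, reduction to the vanishing of $H^2_W(Z,\GG_m)$ and $H^3_W(Z,\GG_m)$, and absolute cohomological purity for $\mu_n$ after d\'evissage to smooth strata of $W$ (which, for a smooth variety over $\CC$, is classical and does not need Gabber's general theorem). The outline is correct; the Gysin twist and the inequality $i<2c$ with $c\ge 2$ are right. The one step you assert rather than establish is the passage from $\mu_n$ back to $\GG_m$: the Kummer sequence only shows that $H^i_W(Z,\GG_m)$ has no $n$-torsion once $H^i_W(Z,\mu_n)=0$, so you must separately know these groups \emph{with supports} are torsion, and the remark in the paper only records that $H^2(Z,\GG_m)$ is torsion. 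This is easily repaired with ingredients you already have: since $Z$ is regular and $\codim W\ge 2$, the restriction $\text{Pic}(Z)\to \text{Pic}(U)$ is surjective (close up divisors), so by exactness $H^2_W(Z,\GG_m)$ injects into the torsion group $H^2(Z,\GG_m)$; and $H^3_W(Z,\GG_m)$ is caught between $H^2(U,\GG_m)$ and $H^3(Z,\GG_m)$, both torsion for smooth varieties by Grothendieck's theorems on the Brauer group, hence is itself torsion. (For surjectivity of $\Br(Z)\to\Br(U)$ it even suffices that the map from the torsion group $H^2(U,\GG_m)$ to the torsion-free group $H^3_W(Z,\GG_m)$ vanishes.) With that patch the argument is complete, and your closing alternative --- identifying both Brauer groups with the unramified classes in $\Br(K(Z))$, which depend only on the codimension-one points shared by $Z$ and $U$ --- is an equally valid packaging of the same purity input.
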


On the moduli space of stable vector bundles $M(r,\Lambda)$,
there are three natural projective bundles. We will describe them.

We first note that there
is a unique universal projective bundle over $X\times M(r,\Lambda)$.
Fix a point $x\, \in\, X$. Restricting the universal projective bundle to
$\{x\} \times M(r,\Lambda)$ we get a projective bundle
 \begin{equation}\label{eqn:px}
 {\mathbb P}_x\longrightarrow M(r,\Lambda) \, .
 \end{equation}

Secondly, if $d \geq r(2g-2)$, then we have the projective bundle
 \begin{equation}\label{eqn:p0}
 {\mathcal P}_0\longrightarrow M(r,\Lambda)\, ,
 \end{equation}
whose fiber over any $E\in M(r,\Lambda)$ is the projective space ${\mathbb
P}(H^0(E))$; note that we have $H^1(E)\,=\, 0$ because $d \geq r(2g-2)$.

Finally, assuming $d>0$, let
 \begin{equation}\label{eqn:p1}
 {\mathcal P}_1\longrightarrow M(r,\Lambda)
 \end{equation}
be the projective bundle whose fiber over any $E\in
M(r,\Lambda)$ is the projective space ${\mathbb P}(H^1(E^*))$.

\begin{prop}\label{pr-br}
The Brauer class ${\rm cl}({\mathbb P}_x)\, \in\, \Br(M(r,\Lambda))$
is independent of $x\, \in \, X$. Moreover,
 $$
 {\rm cl}({\mathbb P}_x) \, =\,
 {\rm cl}({\mathcal P}_0)\,=\, - {\rm cl}({\mathcal P}_1)\, ,
 $$
when they are defined.
\end{prop}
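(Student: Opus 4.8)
The plan is to realize all three projective bundles as projectivizations of (twisted) sheaves built from a single universal object on $X\times M(r,\Lambda)$, and then to track Brauer classes through the operations relating them. Write $M=M(r,\Lambda)$, let $p_X,p_M$ be the two projections from $X\times M$, and for $x\in X$ let $i_x\colon M\hra X\times M$ be the inclusion $m\mapsto (x,m)$, so that $p_M\circ i_x=\mathrm{id}_M$. Since a universal family exists only up to a Brauer twist, there is a twisted universal sheaf $\cE$ on $X\times M$ whose projectivization is the universal projective bundle; set $\alpha:={\rm cl}(\PP(\cE))\in\Br(X\times M)$. Tensoring $\cE$ by a line bundle pulled back from $M$ changes neither $\PP(\cE)$ nor $\alpha$, so $\alpha$ is well defined. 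I will use three standard properties of the Brauer class of a projective bundle: the class of $\PP(V)$ equals the twisting class of $V$; ${\rm cl}(\PP(V^*))=-{\rm cl}(\PP(V))$; and ${\rm cl}(\PP(f^*V))=f^*{\rm cl}(\PP(V))$ for a morphism $f$. By construction $\mathbb{P}_x=\PP(i_x^*\cE)$, hence ${\rm cl}(\mathbb{P}_x)=i_x^*\alpha$.

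The heart of the argument is to show that $\alpha$ is pulled back from the base, i.e.\ $\alpha=p_M^*\beta$ for some $\beta\in\Br(M)$ (necessarily unique, since $i_x^*p_M^*=\mathrm{id}$). First, restricting $\cE$ to a fibre $X\times\{m\}\cong X$ gives $\alpha|_{X\times\{m\}}=0$, because the Brauer group of a smooth curve over $\CC$ vanishes by Tsen's theorem. To upgrade this fibrewise triviality to membership in $p_M^*\Br(M)$, I would pass to the cohomological description $\Br(\,\cdot\,)=H^3(\,\cdot\,,\ZZ)_{\mathrm{tors}}$, valid here since $H^2(\cO)=0$ for $M$ and for $X\times M$, and apply the Künneth formula. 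As $H^*(X,\ZZ)$ is torsion-free, all Tor terms vanish and $H^3(X\times M,\ZZ)$ splits as $H^3(M)\oplus\bigl(H^1(X)\otimes H^2(M)\bigr)\oplus\bigl(H^2(X)\otimes H^1(M)\bigr)$; using that $M$ is simply connected with $H^2(M,\ZZ)$ torsion-free, the last two summands are torsion-free, so $H^3(X\times M,\ZZ)_{\mathrm{tors}}=p_M^*H^3(M,\ZZ)_{\mathrm{tors}}$. This yields $\alpha=p_M^*\beta$, and therefore ${\rm cl}(\mathbb{P}_x)=i_x^*p_M^*\beta=\beta$ for every $x$, which is the asserted independence of $x$.

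It then remains to identify $\mathcal{P}_0$ and $\mathcal{P}_1$ and read off their twists. When $d\ge r(2g-2)$ one has $R^1p_{M*}\cE=0$, and by cohomology and base change $p_{M*}\cE$ is a twisted locally free sheaf with fibre $H^0(E)$ over $E$, so $\mathcal{P}_0=\PP(p_{M*}\cE)$. Because the twist $\alpha=p_M^*\beta$ of $\cE$ is pulled back along $p_M$, its direct image $p_{M*}\cE$ carries the twist $\beta$, giving ${\rm cl}(\mathcal{P}_0)=\beta={\rm cl}(\mathbb{P}_x)$. For $\mathcal{P}_1$, the dual $\cE^*$ has twist $p_M^*(-\beta)$; since $d>0$ forces $H^0(E^*)=0$ we get $p_{M*}\cE^*=0$, while $R^1p_{M*}\cE^*$ is twisted locally free with fibre $H^1(E^*)$, so $\mathcal{P}_1=\PP(R^1p_{M*}\cE^*)$ has twist $-\beta$; equivalently, via Serre duality $\mathcal{P}_1=\PP\bigl((p_{M*}(\cE\otimes p_X^*K_X))^*\bigr)$. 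Either way ${\rm cl}(\mathcal{P}_1)=-\beta$, that is ${\rm cl}(\mathbb{P}_x)=-{\rm cl}(\mathcal{P}_1)$, completing the chain of equalities.

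The main obstacle is the middle step: deducing $\alpha\in p_M^*\Br(M)$ rather than merely that $\alpha$ restricts trivially to the fibres of $p_M$. This is exactly where the topology of $M(r,\Lambda)$ enters, through the torsion-freeness of its low-degree integral cohomology (simple connectedness together with $H^2\cong\ZZ$); without such input the cross-terms in the Künneth decomposition could in principle carry torsion and contribute Brauer classes on $X\times M$ that are not pulled back from $M$, breaking both the independence of $x$ and the comparison with $\mathcal{P}_0$ and $\mathcal{P}_1$. The remaining ingredients—cohomology and base change for the twisted direct images, and the behaviour of ${\rm cl}(\PP(-))$ under duality and pullback—are routine once the twisted-sheaf formalism is set up.
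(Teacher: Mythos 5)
Your reduction of the whole proposition to the single claim $\alpha=p_M^*\beta$ is sound, and the twisted-sheaf bookkeeping afterwards (pushforward of a $p_M^*\beta$-twisted sheaf is $\beta$-twisted, dualizing negates the twist, $i_x^*p_M^*=\mathrm{id}$) is correct. The gap is in the step you yourself flag as the heart of the argument. You justify $\alpha\in p_M^*\Br(M)$ by invoking ``$\Br(\,\cdot\,)=H^3(\,\cdot\,,\ZZ)_{\mathrm{tors}}$, valid since $H^2(\cO)=0$.'' That identification is a statement about smooth \emph{projective} varieties: it needs Lefschetz $(1,1)$ to guarantee that $\mathrm{Pic}\to H^2(\,\cdot\,,\ZZ)$ is onto, so that the $H^2(\,\cdot\,,\ZZ)\otimes\QQ/\ZZ$ part of $H^2(\,\cdot\,,\QQ/\ZZ)$ is absorbed by line bundles and only $H^3_{\mathrm{tors}}$ survives in $\Br$. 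But $M(r,\Lambda)$ is projective only when $\gcd(r,d)=1$, which is exactly the case where a genuine universal bundle exists, $\alpha=0$, and the proposition is vacuous. In the interesting non-coprime case $M(r,\Lambda)$ is the quasi-projective stable locus, where $\mathrm{Pic}(M)\to H^2(M,\ZZ)\cong\ZZ$ has image of finite index $\delta>1$; from the Kummer sequence $\Br(M)[n]=H^2(M,\ZZ/n)/c_1(\mathrm{Pic}(M))\supseteq\ZZ/\gcd(\delta,n)$, so $\Br(M)\neq H^3(M,\ZZ)_{\mathrm{tors}}$ there. Indeed, if your identification held, the class ${\rm cl}(\mathbb{P}_x)$ would have to vanish whenever $H^3(M,\ZZ)$ is torsion-free, contradicting the non-triviality of $\Br(M(r,\Lambda))$ established in \cite{balaji-biswas-gabber-nagaraj:2007} and reproved in Corollary \ref{cor1}. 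So the K\"unneth computation, as written, is applied to the wrong group.

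The strategy is repairable: work at finite level with $\Br(\,\cdot\,)[n]=H^2(\,\cdot\,,\mu_n)/\mathrm{Pic}$, use simple connectedness of $M$ to kill $H^1(M,\ZZ/n)$ and hence the cross term $H^1(X)\otimes H^1(M)$ in $H^2(X\times M,\ZZ/n)$, note $\mathrm{Pic}(X\times M)=\mathrm{Pic}(X)\oplus\mathrm{Pic}(M)$, and conclude $p_M^*:\Br(M)\to\Br(X\times M)$ is an isomorphism. Even then you are importing non-trivial topology of the stable locus (simple connectedness, the \'etale/analytic comparison on a quasi-projective variety). The paper avoids all of this: it presents $M(r,\Lambda)$ as a GIT quotient of the stable locus $\mathcal{Q}^s$ of a Quot scheme by $\mathrm{GL}_N(\CC)$, observes that the center $\CC^*$ acts on the universal bundle $\cE^s$ with weight $1$, hence with weight $1$ on $\cE_x$ and $\cE_0=p_{2*}\cE^s$ and weight $-1$ on $\cE_1=R^1p_{2*}((\cE^s)^*)$, so that $\cE_x\otimes\cE_1$ and $\cE_0\otimes\cE_1$ have trivial central weight and descend to honest vector bundles on $M(r,\Lambda)$; this gives all three equalities of Brauer classes with no cohomological input whatsoever, and the independence of $x$ (including for $d\le 0$, where $\mathcal{P}_0,\mathcal{P}_1$ are undefined and your construction of $\cE_0$ also breaks down) follows by twisting $\Lambda$ by a large power of a line bundle.
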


\begin{proof}
The moduli space $M(r,\Lambda)$ is constructed as a Geometric
Invariant Theoretic quotient of a Quot scheme $\mathcal Q$ by the action
of a linear group $\text{GL}_N(\mathbb C)$ (see \cite{Ne}). The isotropy
subgroup for a stable point of $\mathcal Q$ is the
center ${\mathbb C}^*\, \subset\, \text{GL}_N(\mathbb C)$. There
is a universal vector bundle
 $$
 {\mathcal E} \longrightarrow X\times \mathcal Q\, .
 $$
Let $Z(\text{GL}_N(\mathbb C))$ be the center of
$\text{GL}_N(\mathbb C)$.
The action of the subgroup $Z(\text{GL}_N(\mathbb C))$ on
$\mathcal Q$ is trivial. Therefore, $Z(\text{GL}_N(\mathbb C))$
acts on each fiber of ${\mathcal E}$.
Identify $Z(\text{GL}_N(\mathbb C))$ with
${\mathbb C}^*$ by sending any $\lambda\, \in\, {\mathbb C}^*$
to $\lambda\cdot \text{Id}$. We note that
$\lambda\, \in\, Z(\text{GL}_N(\mathbb C))$ acts
on ${\mathcal E}$ as multiplication by $\lambda$.

Let ${\mathcal Q}^s\, \subset\, \mathcal Q$ be the stable locus.
The restriction of $\mathcal E$ to $X\times{\mathcal Q}^s$ will be
denoted by ${\mathcal E}^s$. Let
 $$
 {\mathcal E}_x\, :=\, {\mathcal E}^s\vert_{\{x\}\times{\mathcal Q}^s}
 \longrightarrow{\mathcal Q}^s
 $$
be the restriction. Let $p_2\, :\,  X\times {\mathcal
Q}^s\longrightarrow{\mathcal Q}^s$ be the
natural projection. Define the vector bundles
 $$
 {\mathcal E}_0\, :=\, p_{2*}\, {\mathcal E}^s ~\,~\,\text{~and~}
 ~\,~\,{\mathcal E}_1\, :=\, R^1p_{2*}(({\mathcal E}^s)^*)\, .
 $$

We noted that any $\lambda\,\in\, {\mathbb C}^*\,=\,
Z(\text{GL}_N(\mathbb C))$ acts on ${\mathcal E}_x$ as
multiplication by $\lambda$. Therefore, $\lambda$ acts
on $({\mathcal E}^s)^*$ as multiplication by $1/\lambda$.
Hence $\lambda$ acts on ${\mathcal E}_1$ as multiplication by
$1/\lambda$. Consequently, the action of $Z(\text{GL}_N(\mathbb C))$
on ${\mathcal E}_x\otimes {\mathcal E}_1$ is trivial. Hence
${\mathcal E}_x\otimes {\mathcal E}_1$ descends to a vector
bundle over the quotient $M(r,\Lambda)$ of ${\mathcal Q}^s$.
Therefore,
 $$
 {\rm cl}({\mathbb P}_x)\,=\, - {\rm cl}({\mathcal P}_1)\, .
 $$

Any $\lambda\, \in\, {\mathbb C}^*\,=\, Z(\text{GL}_N(\mathbb C))$
acts on ${\mathcal E}_0$ as multiplication by $\lambda$. Indeed,
this follows immediately from the fact that $\lambda$ acts
as multiplication by $\lambda$ on ${\mathcal E}^s$.
As noted earlier,  $\lambda$ acts on ${\mathcal E}_1$ as multiplication
by $1/\lambda$. Hence the action of $Z(\text{GL}_N(\mathbb C))$ on
${\mathcal E}_0\otimes {\mathcal E}_1$ is trivial.
Thus ${\mathcal E}_0\otimes {\mathcal E}_1$ descends to
$M(r,\Lambda)$, implying
 $$
 {\rm cl}({\mathcal P}_0)\,=\, - {\rm cl}({\mathcal P}_1)\, .
 $$

Finally, note that it follows that ${\rm cl}({\mathbb P}_x)$ is
independent of $x\,  \in\,  X$ for $d>0$. For $d\leq 0$, ${\mathcal P}_0$
and ${\mathcal P}_1$ are not defined. In this case, we take a line
bundle $\mu$ or large degree, and use the isomorphism
$M(r,\Lambda\otimes \mu^r) \cong M(r,\Lambda)$. For any
pair $x,x'\, \in\, X$, since ${\rm cl}({\mathbb
P}_x)=
{\rm cl}({\mathbb P}_{x'})$ in $\Br(M(r,\Lambda\otimes \mu^r))$,
the same holds for $\Br(M(r,\Lambda))$.
\end{proof}

\begin{thm}\label{theorem1}
Assume that $d > r(2g-2)$. Then
for the moduli space $\lM_{\tau}(r,\Lambda)$ of stable pairs, we have that
  $$
  \Br(\lM_{\tau}(r,\Lambda))\,=\,0 \, .
  $$
\end{thm}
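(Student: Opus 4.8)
The plan is to show first that $\Br(\lM_\tau(r,\Lambda))$ does not depend on the chamber of $\tau$, then to identify the two extreme chambers as Severi--Brauer fibrations over the bundle moduli spaces $M(r,\Lambda)$ and $M(r-1,\Lambda)$, and finally to run an induction on the rank $r$ whose base case $r=2$ is settled by rationality.

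First I would fix a noncritical value $\tau$ and observe that crossing a critical value $\tau_c$ replaces $\lM_{\tau_c^-}(r,\Lambda)$ by $\lM_{\tau_c^+}(r,\Lambda)$, both containing the common open subset $\lM_{\tau_c}(r,\Lambda)$ whose complements are the flip loci $\cS_{\tau_c^\pm}$. For $r\ge 3$, Proposition \ref{prop:codim} gives $\codim\cS_{\tau_c^\pm}\ge 2$, so Theorem \ref{thm:purity} yields $\Br(\lM_{\tau_c^-})=\Br(\lM_{\tau_c})=\Br(\lM_{\tau_c^+})$; hence $\Br(\lM_\tau(r,\Lambda))$ is the same for every $\tau\in(\tau_m,\tau_M)$. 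Since $d>r(2g-2)$, Proposition \ref{prop:Um-UM} gives $\codim\cS_{\tau_m^+}\ge 2$ and $\codim\cS_{\tau_M^-}\ge 2$, so purity again yields $\Br(\lM_{\tau_m^+})=\Br(\cU_m)$ and $\Br(\lM_{\tau_M^-})=\Br(\cU_M)$. By \eqref{eq:ftaum} together with $H^1(E)=0$, the space $\cU_m$ is the projective bundle ${\mathcal P}_0\to M(r,\Lambda)$, a Severi--Brauer scheme of class $\mathrm{cl}({\mathcal P}_0)=\mathrm{cl}(\PP_x)$; by \eqref{eq:ftauM} the space $\cU_M$ is the Severi--Brauer scheme ${\mathcal P}_1\to M(r-1,\Lambda)$ of class $-\mathrm{cl}(\PP_x)$ (Proposition \ref{pr-br}, applied in rank $r-1$).

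The key computational input is the behaviour of the Brauer group along a Severi--Brauer fibration: if $f\colon P\to S$ is the projectivization of an Azumaya algebra of class $\alpha\in\Br(S)$ and $S$ is smooth, then $f^\ast\colon\Br(S)\to\Br(P)$ is surjective with kernel the cyclic subgroup $\langle\alpha\rangle$, so $\Br(P)\cong\Br(S)/\langle\alpha\rangle$. I would deduce this from the Leray spectral sequence of $f$ for $\GG_m$, using $R^1f_\ast\GG_m=\underline{\ZZ}$, $R^2f_\ast\GG_m=0$, the vanishing $H^1_{\text{\'et}}(S,\ZZ)=0$ for $S$ normal, and the identification of the transgression $d_2\colon\ZZ\to\Br(S)$ with multiplication by $\alpha$. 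Applying this to both extreme chambers gives
\begin{equation*}
\Br(\lM_\tau(r,\Lambda))\,=\,\Br(M(r,\Lambda))/\langle\mathrm{cl}(\PP_x)\rangle\,=\,\Br(M(r-1,\Lambda))/\langle\mathrm{cl}(\PP_x)\rangle,
\end{equation*}
where in the last group $\mathrm{cl}(\PP_x)$ denotes the class on $M(r-1,\Lambda)$, the subgroups generated by $\alpha$ and $-\alpha$ being equal.

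Finally I would induct on $r$. For $r=2$ the spaces $\lM_\tau(2,\Lambda)$ are smooth projective at noncritical $\tau$ and rational, since $\lM_{\tau_M^-}(2,\Lambda)=\PP(H^1(\Lambda^{-1}))$ and all chambers are birational (Corollary \ref{cor:birationality}); as the Brauer group is a birational invariant of smooth projective varieties and $\Br(\PP^N)=0$, we get $\Br(\lM_\tau(2,\Lambda))=0$. For the inductive step, the rank $r-1$ case applied to its own $\tau_m^+$ reduction gives $\Br(M(r-1,\Lambda))/\langle\mathrm{cl}(\PP_x)\rangle=\Br(\lM_\tau(r-1,\Lambda))=0$, and comparison with the displayed equality forces $\Br(\lM_\tau(r,\Lambda))=0$. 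The main obstacle I expect is the Severi--Brauer formula above, together with the bookkeeping of the finitely many small $(r,g,d)$ where a flip locus or an extreme locus drops to codimension one (notably the case $r=3$, $g=2$, $d$ even for $\codim\cS_{\tau_M^-}$), which is precisely the source of the excluded value in Theorem \ref{main1}; there purity fails on one side and one must argue through the single usable extreme.
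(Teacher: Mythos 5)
Your overall strategy is the paper's own: chamber-independence of the Brauer group via Proposition \ref{prop:codim} and purity, identification of the two extreme chambers with the projective bundles ${\mathcal P}_0\to M(r,\Lambda)$ and ${\mathcal P}_1\to M(r-1,\Lambda)$, the Gabber-type sequence $\ZZ\cdot\alpha\to\Br(S)\to\Br(P)\to 0$ (the paper cites Gabber directly rather than re-deriving it from the Leray spectral sequence, but that is cosmetic), the comparison of the two quotients of $\Br(M(r-1,\Lambda))$ using ${\rm cl}({\mathcal P}_1)=-{\rm cl}({\mathcal P}_0)$, and descending induction to the rational rank-$2$ case. All of that is correct and matches the paper.

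The genuine gap is your treatment of the exceptional case. First, a misattribution: the triple excluded from Theorem \ref{main1} is $(3,2,2)$, which has $d=2\le r(2g-2)=6$ and so is not even in the range of Theorem \ref{theorem1}; the case you actually have to deal with \emph{inside} Theorem \ref{theorem1} is $(r,g,d)=(3,2,d)$ with $d$ even and $d>6$, where $\codim\cS_{\tau_M^-}=1$ and also $(r-1,g,d)=(2,2,\text{even})$ makes $\codim\cS_{\tau_m^+}=1$ in rank $2$. There your plan to ``argue through the single usable extreme'' does not close the argument: the usable extreme is $\tau_m^+$, and it only yields $\Br(\lM_\tau(3,\Lambda))=\Br(M(3,\Lambda))/\langle{\rm cl}(\PP_x)\rangle$. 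To conclude that this quotient vanishes you would need to know that ${\rm cl}(\PP_x)$ generates $\Br(M(3,\Lambda))$, i.e.\ the result of \cite{balaji-biswas-gabber-nagaraj:2007}, which is exactly what the paper is trying to re-derive as Corollary \ref{cor1}; the $\tau_M^-$ side, which is what feeds the induction down to rank $2$, is unavailable because purity fails across the codimension-one locus $\cS_{\tau_M^-}$. The paper's fix is the determinant-twisting trick: tensoring by a degree-one line bundle $\nu$ gives an isomorphism $M(3,\Lambda)\cong M(3,\Lambda\otimes\nu^3)$ preserving ${\rm cl}(\PP_x)$, hence ${\rm cl}({\mathcal P}_0)$, so $\Br(\cU_m(3,\Lambda))\cong\Br(\cU_m(3,\Lambda\otimes\nu^3))$, and the latter vanishes because $\deg(\Lambda\otimes\nu^3)$ is odd. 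You need this (or an equivalent device) to complete the proof; as written your argument only covers $(r,g,d)\neq(3,2,\text{even})$. A minor further point: for $r=2$ and critical $\tau$ you should add the one-sided purity step $\lM_{\tau_c}(2,\Lambda)=\lM_{\tau_c^+}(2,\Lambda)\setminus\cS_{\tau_c^+}$ with $\codim\cS_{\tau_c^+}\ge 2$, since rationality alone only handles the noncritical (projective) chambers.
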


\begin{proof}
We will first prove it for $r\, =\, 2$. Recall from (\ref{eq:r2tauM})
that $\lM_{\tau_{M}^{-}}(2,\Lambda)$ is a projective space, hence
  $$
  \Br(\lM_{\tau_{M}^{-}}(2,\Lambda))=0\, .
  $$
Moreover, all $\lM_{\tau}(2,\Lambda)$ are rational varieties. Thus
  $$
  \Br(\lM_{\tau}(2,\Lambda))=0
  $$
for non-critical values $\tau\in (\tau_m,\tau_M)$,
since the Brauer group of a smooth rational projective variety
is zero \cite[p. 77, Proposition 1]{AM}. For a critical value $\tau_c$, we have
 $$
 \lM_{\tau_{c}}(2,\Lambda)= \lM_{\tau_{c}^+}(2,\Lambda) \setminus \cS_{\tau_{c}^+}\, .
 $$
By Proposition \ref{prop:codim}, $\codim \cS_{\tau_{c}^{+}} \geq 2$, so the
Purity Theorem implies that
  $$
  \Br(\lM_{\tau_c}(2,\Lambda))=0\, .
  $$

Now we assume that $r\,\ge\, 3$. From Proposition \ref{prop:codim} and
Theorem \ref{thm:purity} it
follows that the Brauer group $\Br(\lM_{\tau}(r,\Lambda))$ does not
depend on the value of the parameter $\tau$ (for fixed $r$ and
$\Lambda$).

As 
$d \geq r(2g-2)$, we have a projective
bundle $$\pi_1 : \cU_m (r,\Lambda) \longrightarrow M(r,\Lambda)$$
(see \eqref{eq:ftaum}). Note that this projective bundle
coincides with the projective bundle ${\mathcal P}_0$ in
(\ref{eqn:p0}). The projective bundle $\pi_1$ gives an exact sequence
 \begin{equation}\label{e0}
 {\mathbb Z}\cdot \text{cl}({\mathcal P}_0)\,\longrightarrow\,
 \Br(M(r,\Lambda))\,\longrightarrow\,
 \Br(\cU_m (r,\Lambda)) \,\longrightarrow\, 0
 \end{equation}
(see \cite[p.\ 193]{Ga}). As $d>r(2g-2)$, Proposition \ref{prop:Um-UM} and the Purity
Theorem give
  $$
  \Br(\cU_m(r,\Lambda)) \, = \, \Br(\lM_{\tau_{m}^{+}} (r,\Lambda))\, ,
  $$
so we have
 \begin{equation}\label{e1}
 {\mathbb Z}\cdot \text{cl}({\mathcal P}_0)\,\longrightarrow\,
 \Br(M(r,\Lambda))\,\longrightarrow\,
 \Br(\lM_{\tau_{m}^{+}}(r,\Lambda)) \,\longrightarrow\, 0\, .
 \end{equation}

We will show that the theorem follows from \eqref{e1} if we use
\cite{balaji-biswas-gabber-nagaraj:2007}. From Proposition
\ref{pr-br} we know that $\text{cl}({\mathcal P}_0)\,=\,
{\rm cl}({\mathbb P}_x)$, and from
\cite[Proposition 1.2(a)]{balaji-biswas-gabber-nagaraj:2007} we know
that ${\rm cl}({\mathbb P}_x)$ generates $\Br(M(r,\Lambda))$.
Therefore, from \eqref{e1} it follows that
 $$
 \Br(\lM_{\tau_{m}^{+}}(r,\Lambda))\,=\,0\, .
 $$
Since $\Br(\lM_{\tau}(r,\Lambda))$ is independent of $\tau$, this
completes the proof using \cite{balaji-biswas-gabber-nagaraj:2007}.
But we shall give a different proof without using
\cite{balaji-biswas-gabber-nagaraj:2007}, because we want
to show that the above mentioned result of
\cite{balaji-biswas-gabber-nagaraj:2007} can be deduced from
our Theorem \ref{theorem1} (see Corollary \ref{cor1}).

Consider the projective bundle $\pi_2: \cU_M (r-1,\Lambda)\,
\longrightarrow\,  M(r-1,\Lambda)$ from
\eqref{eq:ftauM}. Note that this projective bundle coincides with
the projective bundle ${\mathcal P}_1$ in (\ref{eqn:p1}) for rank $r-1$.
The projective bundle $\pi_2$ gives an exact sequence
  \begin{equation}\label{e2}
  {\mathbb Z}\cdot \text{cl}({\mathcal P}_1)\,\longrightarrow\,
  \Br(M(r-1,\Lambda))\,\longrightarrow\,
  \Br(\cU_M(r,\Lambda))=\Br(\lM_{\tau_{M}^{-}}(r,\Lambda)) \,\longrightarrow\, 0\, ,
  \end{equation}
using Proposition \ref{prop:Um-UM}, with the exception of the case $(r,g,d)=(3,2, \text{even})$.
Let us leave this ``bad'' case aside for the moment.

Let
  \begin{equation}\label{e3}
 {\mathbb Z}\cdot \text{cl}({\mathcal P}_0)\,\longrightarrow\,
 \Br(M(r-1,\Lambda))\,\longrightarrow\,
 \Br(\cU_m(r-1,\Lambda))=\Br(\lM_{\tau_{m}^{+}}(r-1,\Lambda)) \,\longrightarrow\, 0
 \end{equation}
be the exact sequence obtained by replacing $r$ with $r-1$ in
\eqref{e1}; the last equality holds as $(r-1,g,d)\neq (2,2,\text{even})$, by
Proposition \ref{prop:Um-UM}.

Since $\text{cl}({\mathcal P}_1)\,=\, -\text{cl}({\mathcal
P}_0)$ (see Proposition \ref{pr-br}), comparing \eqref{e2} and
\eqref{e3} we conclude that the two quotients of
$\Br(M(r-1,\Lambda))$, namely
 $$
 \Br(\lM_{\tau_{M}^{-}}(r,\Lambda))\,~\,~ \text{~and~}\,~\,~
 \Br(\lM_{\tau_{m}^{+}}(r-1,\Lambda))\, ,
 $$
coincide. In particular,
$\Br(\lM_{\tau_{M}^{-}}(r,\Lambda))$ is isomorphic to
$\Br(\lM_{\tau_{m}^{+}}(r-1,\Lambda))$. Therefore, using induction,
the group $\Br(\lM_{\tau_{M}^{-}}(r,\Lambda))$ is isomorphic
to $\Br(\lM_{\tau_{m}^{+}}(2,\Lambda))$. We have already shown
that $\Br(\lM_{\tau_{m}^{+}}(2,\Lambda))\,=\,0$. Hence the
proof of the theorem is complete for $d > r(2g-2)$ and  $(r,g,d) \neq (3,2, \text{even})$.

Let us now investigate the missing case of
$(r,g,d)=(3,2, 2k)$. Take a line bundle $\nu$ of degree $1$. Using (\ref{e0}) twice, we have
 $$
   \begin{array}{ccccccc}
  {\mathbb Z}\cdot \text{cl}({\mathcal P}_0) &\longrightarrow &  \Br(M(3,\Lambda))
  &\longrightarrow & \Br(\cU_m (3,\Lambda)) &\longrightarrow& 0  \\
   \downarrow \cong && || \\
  {\mathbb Z}\cdot \text{cl}({\mathcal P}_0) &\longrightarrow &
\Br(M(3,\Lambda\otimes \nu^3))
  &\longrightarrow & \Br(\cU_m (3,\Lambda\otimes\nu^3)) &\longrightarrow& 0  \\
  \end{array}
  $$
The second vertical map is induced by the isomorphism
$M(3,\Lambda)\longrightarrow M(3,\Lambda\otimes \nu^3)$
defined by $E\mapsto E\otimes \nu$, hence it
is an isomorphism. This isomorphism preserves the class
$\text{cl}({\mathbb P}_x)$, and hence the class $\text{cl}({\mathcal P}_0)$, by Proposition
\ref{pr-br}. Therefore, $\Br(\cU_m(3,\Lambda))=\Br(\cU_m(3,\Lambda\otimes \nu^3))$.
But $\deg(\Lambda \otimes \nu^3)$ is odd, hence
 $$
 \Br(\cU_m(3,\Lambda))=\Br(\cU_m(\Lambda\otimes \nu^3))=0\, .
  $$
By the Purity Theorem, $\Br(\lM_\tau(3,\Lambda))=0$ for any $\tau$.
\end{proof}

Note that the proof of Theorem \ref{theorem1} works in the following
cases:
\begin{itemize}
\item{} $r=2$, any $d$\, ;
\item{} $(r,g,d)\neq (3,2,\text{even})$, $d > (r-1)(2g-2)$\, ; and
\item{} $r=3$, $g=2$, $d > 6$\, .
\end{itemize}

Before proceeding to remove the assumption $d > r(2g-2)$ in Theorem
\ref{theorem1},
we want to show that Theorem \ref{theorem1} implies
Proposition 1.2(a) of \cite{balaji-biswas-gabber-nagaraj:2007}.

\begin{cor}\label{cor1}
Suppose that $(r,g,d)\neq (2,2,\text{even})$.
The Brauer group $\Br(M(r,\Lambda))$ is generated by the
Brauer class ${\rm cl}({\mathbb P}_x)\, \in\, \Br(M(r,\Lambda))$
in \eqref{eqn:px}.
\end{cor}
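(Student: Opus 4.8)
The plan is to read the corollary off directly from the exact sequence \eqref{e1} as soon as its right-hand term is known to vanish, which is exactly the content of Theorem \ref{theorem1}. Concretely, for $d>r(2g-2)$ with $(r,g,d)\neq(2,2,\text{even})$, the map $\pi_1$ of \eqref{eq:ftaum} is the projective bundle $\mathcal P_0$ of \eqref{eqn:p0}, and \eqref{e1} becomes
$$
\mathbb{Z}\cdot\text{cl}(\mathcal P_0)\longrightarrow\Br(M(r,\Lambda))\longrightarrow\Br(\lM_{\tau_m^+}(r,\Lambda))\longrightarrow 0 .
$$
Theorem \ref{theorem1} gives $\Br(\lM_{\tau_m^+}(r,\Lambda))=0$, so the first arrow is surjective, i.e.\ $\Br(M(r,\Lambda))$ is generated by $\text{cl}(\mathcal P_0)$. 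Since $\text{cl}(\mathcal P_0)=\text{cl}(\mathbb{P}_x)$ by Proposition \ref{pr-br}, the corollary follows in this degree range.

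Next I would remove the hypothesis $d>r(2g-2)$ via the twisting device used at the end of the proof of Theorem \ref{theorem1}. Choosing a line bundle $\mu$ with $\deg(\Lambda\otimes\mu^r)>r(2g-2)$, the assignment $E\mapsto E\otimes\mu$ is an isomorphism $M(r,\Lambda)\xrightarrow{\sim}M(r,\Lambda\otimes\mu^r)$. Tensoring by a line bundle leaves projectivizations unchanged, so this isomorphism carries the universal projective bundle to the universal projective bundle and hence identifies the two classes $\text{cl}(\mathbb{P}_x)$ (cf.\ Proposition \ref{pr-br}). Applying the high-degree case to $M(r,\Lambda\otimes\mu^r)$ and transporting back along this isomorphism then yields the statement for arbitrary $d$.

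The only step that needs genuine care—and which I expect to be the main obstacle—is verifying that the excluded triple is stable under the twist and that the hypotheses feeding \eqref{e1} survive it. Since the degree changes by $r\deg\mu$, for $r=2$ the parity of $d$ is preserved, so a triple with $(2,2,d)$, $d$ odd, or with $g\neq 2$, never maps into the forbidden class $(2,2,\text{even})$; for $r\geq 3$ there is no parity constraint at all. One must also confirm that after twisting the codimension bounds of Proposition \ref{prop:Um-UM} still hold, so that $\Br(\cU_m(r,\Lambda\otimes\mu^r))=\Br(\lM_{\tau_m^+}(r,\Lambda\otimes\mu^r))$ and the sequence \eqref{e1} is available. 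Both are bookkeeping consequences of the results already assembled above, after which the corollary follows at once, giving the desired new proof that $\Br(M(r,\Lambda))$ is generated by $\text{cl}(\mathbb{P}_x)$ of \eqref{eqn:px}.
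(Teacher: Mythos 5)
Your proposal is correct and follows essentially the same route as the paper: reduce to large degree via the isomorphism $E\mapsto E\otimes\mu$, invoke Theorem \ref{theorem1} together with Proposition \ref{prop:Um-UM} and purity to make the right-hand term of \eqref{e0}/\eqref{e1} vanish, and conclude via Proposition \ref{pr-br}. Your extra check that the excluded triple $(2,2,\text{even})$ is preserved under twisting (parity of $d$ unchanged when $r=2$) is a point the paper leaves implicit, but it is correct and does not change the argument.
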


\begin{proof}
Without loss of generality we can assume that $d$ is large (since we have an isomorphism
$M(r,\Lambda) \,\stackrel{\sim}{\longrightarrow}\, M(r,\Lambda \otimes \mu^r)$,
$E\mapsto E\otimes \mu$, where $\mu$ is a line bundle.

First, we have $\Br(\cU_m(r,\Lambda))= \Br(\lM_{\tau_{m}^{+}}(r,\Lambda))$ by
the Purity Theorem and Proposition \ref{prop:Um-UM}. Second, $\Br(\lM_{\tau_{m}^{+}}(r,\Lambda))=0$
by Theorem \ref{theorem1}, so $\Br(\cU_m(r,\Lambda))= 0$. Finally, we use the
exact sequence in
\eqref{e0} to see that $\text{cl}({\mathcal P}_0)$
generates $\Br(M(r,\Lambda))$. Now from Proposition \ref{pr-br}
it follows that ${\rm cl}({\mathbb P}_x)$ generates $\Br(M(r,\Lambda))$.
\end{proof}

\begin{cor}\label{cor:extra}
Suppose $(r,g,d)\,\neq \, (3,2,2)$.
Then we have that $\Br(\lM_{\tau}(r,\Lambda))\,=\,0$.
\end{cor}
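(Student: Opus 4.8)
The plan is to remove the hypothesis $d>r(2g-2)$ from Theorem \ref{theorem1}. The case $r=2$ is already unconditional, so assume $r\ge 3$. For such $r$, Proposition \ref{prop:codim} gives $\codim\cS_{\tau^{\pm}_c}\ge 2$ at every interior critical value, so the Purity Theorem \ref{thm:purity} shows that $\Br(\lM_\tau(r,\Lambda))$ is independent of $\tau$ for \emph{all} $d$, not merely for $d$ large. The key point is that Theorem \ref{theorem1} used large $d$ only to turn $\pi_1\colon\cU_m\to M(r,\Lambda)$ into a projective bundle, whereas $\pi_2\colon\cU_M(r,\Lambda)\to M(r-1,\Lambda)$ from \eqref{eq:ftauM} is a projective bundle for \emph{every} $d>0$. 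So I would compute the Brauer group at the rightmost endpoint $\tau_M^-$ instead of at $\tau_m^+$.

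For $r\ge 3$ with $(r,g,d)\neq(3,2,\text{even})$ this settles everything: the bundle $\pi_2$ is ${\mathcal P}_1$ in rank $r-1$ (with fibres of dimension $\ge 2$), giving the exact sequence ${\mathbb Z}\cdot\text{cl}({\mathcal P}_1)\to\Br(M(r-1,\Lambda))\to\Br(\cU_M)\to 0$. Since $(r-1,g,d)\neq(2,2,\text{even})$, Corollary \ref{cor1} applies to $M(r-1,\Lambda)$ and, via Proposition \ref{pr-br}, the generator $\text{cl}({\mathbb P}_x)=-\text{cl}({\mathcal P}_1)$ fills the whole group; hence $\Br(\cU_M)=0$. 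As $\cU_M$ is dense and open in the smooth variety $\lM_{\tau_M^-}(r,\Lambda)$, the restriction $\Br(\lM_{\tau_M^-})\hookrightarrow\Br(\cU_M)$ is injective — indeed an isomorphism by Purity, using $\codim\cS_{\tau_M^-}\ge 2$ from the second item of Proposition \ref{prop:Um-UM} — so $\Br(\lM_{\tau_M^-})=0$, and $\tau$-independence concludes for every $d>0$.

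The remaining, and genuinely harder, case is $(r,g,d)=(3,2,\text{even})$ with $d\neq 2$. Here the $\tau_M^-$ argument breaks because the base of $\pi_2$ is $M(2,\Lambda)$ of even degree, precisely the case excluded from Corollary \ref{cor1}. I would therefore switch to $\tau_m^+$, whose base $M(3,\Lambda)$ is always covered by Corollary \ref{cor1} (as $r=3\neq2$). The fibre of $\pi_1$ is $\PP(H^0(E))$ and $\chi(E)=d-3$, so sections are generic exactly when $d\ge 4$. For $d=6$ one checks that every stable $E$ has $H^1(E)=0$ (because $E^*\otimes K_X$ is stable of degree $0$), so $\pi_1$ is a genuine $\PP^2$-bundle; the exact sequence \eqref{e0} together with the injection $\Br(\lM_{\tau_m^+})\hookrightarrow\Br(\cU_m)$ then forces $\Br=0$, the injectivity replacing Purity since the codimension estimate of Proposition \ref{prop:Um-UM} is not available at $d=6$. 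For $d>6$ one simply invokes Theorem \ref{theorem1}.

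The hard part is $d=4$, and it is where I would spend the most care. There $\gcd(3,4)=1$, so a universal bundle exists, whence $\text{cl}({\mathbb P}_x)=0$ and $\Br(M(3,\Lambda))=0$ by Corollary \ref{cor1}. Now $\chi(E)=1$, so on the dense open locus $\cU_m^{0}\subset\cU_m$ where $h^1(E)=0$ the fibre of $\pi_1$ is a single point; thus $\cU_m^{0}$ maps isomorphically onto the open set $\{E:h^1(E)=0\}\subset M(3,\Lambda)$. One checks that its complement (a Brill--Noether locus) has codimension $\ge 2$, so Purity gives $\Br(\cU_m^{0})=\Br(M(3,\Lambda))=0$, and the injection $\Br(\lM_{\tau_m^+})\hookrightarrow\Br(\cU_m^{0})$ yields $\Br=0$; equivalently $\cU_m$ is birational to the smooth projective coprime moduli $M(3,\Lambda)$, and birational invariance of the Brauer group gives the same conclusion. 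This analysis also isolates why $(3,2,2)$ is excluded: at $d=2$ one has $\chi(E)=-1<0$, so $\{E:h^0(E)\ge1\}$ is a proper Brill--Noether locus, $\cU_m$ no longer dominates $M(3,\Lambda)$, and the reduction collapses — while the $\tau_M^-$ end stays blocked by the even-degree $M(2,\Lambda)$. The whole obstacle thus concentrates in the sign of $\chi(E)=d-3$ at $(3,2,\text{even})$.
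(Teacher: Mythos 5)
Your reduction for $r\ge 3$, $(r,g,d)\neq(3,2,\text{even})$ via the projective bundle $\pi_2$ at the $\tau_M^-$ end is exactly the paper's argument, but your treatment of the exceptional cases $(3,2,4)$ and $(3,2,6)$ takes a genuinely different and substantially shorter route. The paper insists on the \emph{equalities} $\Br(\lM_{\tau_m^+}(3,\Lambda))=\Br(\cU_m(3,\Lambda))$ and $\Br(M(3,\Lambda)\setminus Z)=\Br(M(3,\Lambda))$ via the Purity Theorem, and therefore must prove codimension-$\ge 2$ bounds by hand: a seven-case stratification bounding $\dim\pi_1^{-1}(Z)$ for $d=4$, and a three-case stratification bounding $\dim\cS_{\tau_m^+}$ for $d=6$. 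You observe that only \emph{vanishing} is needed, so the injectivity of $\Br(V)\hookrightarrow\Br(U)$ for a dense open $U$ of a smooth integral variety $V$ (restriction to the Brauer group of the function field is injective) suffices; this is legitimate because $\lM_{\tau_m^+}(3,\Lambda)$ is irreducible, being open in the normal projective variety $\overline{\lM}_{\tau_m^+}(3,\Lambda)$, so $\cU_m$ and your $\cU_m^{0}$ are automatically dense, and it eliminates both case analyses. Likewise, deducing $\Br(M(3,\Lambda))=0$ for $d=4$ from $\gcd(3,4)=1$ (a universal bundle exists, so ${\rm cl}({\mathbb P}_x)=0$, which generates by Corollary \ref{cor1}) is cleaner than the paper's observation that ${\mathcal P}_0$ restricts to a ${\mathbb P}^0$-bundle off $Z$. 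The one step you leave unverified is $\codim Z\ge 2$, which your first $d=4$ argument still needs in order to transport $\Br(M(3,\Lambda))=0$ to $\Br(M(3,\Lambda)\setminus Z)$ by Purity; that is precisely the extension-class dimension count ($\dim Z=6$, so $\codim Z=3$) that the paper carries out, so it should not be waved through with ``one checks''. However, your fallback via birational invariance of the Brauer group of smooth projective varieties --- applicable because $\tau_m^+$ is non-critical, so $\lM_{\tau_m^+}(3,\Lambda)$ is smooth projective, while $M(3,\Lambda)$ is smooth projective in the coprime case --- bypasses the estimate entirely, requiring only the existence of a single non-special stable bundle; I would promote that to the main argument for $d=4$. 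With that adjustment your proof is complete and, in the exceptional cases, simpler than the paper's.
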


\begin{proof}
For $r=2$, this result is proved as in Theorem \ref{theorem1}.
As we know it for $d> r(2g-2)$, we assume that $d \leq r(2g-2)$.

Let $r\geq 3$. Suppose first that $(r,g,d)\neq (3,2,\text{even})$,
that is, $(r,g,d)\neq (3,2,2)$, $(3,2,4)$, $(3,2,6)$.
As $d>0$, we still have a projective bundle
$\pi_2: \cU_M(r,\Lambda)  \longrightarrow  M(r-1,\Lambda)$. Therefore
there is an exact sequence as in (\ref{e2}). Note that Proposition
\ref{prop:Um-UM} and the Purity Theorem imply that
$\Br(\lM_{\tau_M^-}(r,\Lambda))=\Br(\cU_M(r,\Lambda))$.
Now using Proposition \ref{pr-br} and
Corollary \ref{cor1} and (\ref{e2})
it follows that $\Br(\lM_{\tau_M^-}(r,\Lambda))=0$.
The result follows.

Let us deal with the missing cases $(r,g,d)=(3,2,4)$, $(3,2,6)$.
We start with the case $(r,g,d)=(3,2,4)$. Let
 $$
  Z=\{E\in M(3,\Lambda) \, | \, H^1(E)\neq 0\}.
 $$
For $E\in M(3,\Lambda)\setminus Z$, we have that $\dim H^0(E)=4+3(1-g)=1$. So
the projective bundle
 $$
 \pi_1:\cU_m(3,\Lambda)\setminus \pi_1^{-1}(Z) \longrightarrow M(3,\Lambda)\setminus Z\,
 $$
is actually an isomorphism.
In this situation, the exact sequence
 \begin{equation}\label{eqn:pextra}
  {\mathbb Z}\cdot \text{cl}({\mathcal P}_0)\,\longrightarrow\,
  \Br(M(3,\Lambda)\setminus Z)\,\longrightarrow\,
  \Br(\cU_m(3,\Lambda)\setminus \pi_1^{-1}(Z)) \,\longrightarrow\, 0\,
 \end{equation}
satisfies that $\text{cl}({\mathcal P}_0)=0$.
The proof of Proposition \ref{pr-br} works also for
$M(3,\Lambda)\setminus Z$, so $\text{cl}({\mathbb P}_x)=0$.
We shall see below that
\begin{equation}\label{eqn:codime}
  \codim Z\geq 2 \qquad \text{and} \qquad \codim \pi^{-1}_1(Z)\geq 2\, .
 \end{equation}
{}From this, $\Br(M(3,\Lambda)\setminus Z)=\Br(M(3,\Lambda))$ and
$\Br(\cU_m(3,\Lambda)\setminus \pi_1^{-1}(Z))=\Br(\cU_m(3,\Lambda))=\Br(\lM_{\tau_m^+}
(3,\Lambda))$. By Corollary \ref{cor1}, $\text{cl}({\mathbb P}_x)=0$ generates
$\Br(M(3,\Lambda))$, so $\Br(M(3,\Lambda))=0$ and $\Br(\lM_{\tau_m^+}
(3,\Lambda))=0$, as required.

To see the codimension estimates (\ref{eqn:codime}), we work as follows.
Let $E\in Z\subset M(3,\Lambda)$. So $H^1(E)\neq 0$, i.e.
$H^0(E^\ast\otimes K_X)\neq 0$. Thus there is an exact sequence
 \begin{equation}\label{eqn:ww}
 0\longrightarrow \cO \longrightarrow E'\,=\, E^\ast\otimes K_X
\longrightarrow F\longrightarrow 0\, ,
 \end{equation}
for some sheaf $F$. Note that $\deg(F)=\deg (E')=2$, and $E'$ is stable
(since $E$ stable $\implies$ $E^\ast$ stable $\implies$ $E'=E^\ast \otimes K_X$ stable).
Here $F$ must be a rank $2$ semistable sheaf, since any quotient
$F\to Q$ with $\mu(Q)<\mu(F)=1$, would satisfy that $\mu(Q)<\mu(E')=\frac23$, violating the
stability of $E'$. In particular, $F$ is a (semistable) bundle, and it is parametrized
by an irreducible variety of dimension $\dim M(2,\Lambda)=3(g-1)=3$
(recall that $\dim M(r,\Lambda)=(r^2-1)(g-1)$). Now the bundle $E'$ in
(\ref{eqn:ww}) is given by an extension in ${\mathbb P}(H^1(F^\ast))$. As $H^0(F^\ast)=0$ (by semistability),
we have that $\dim {\mathbb P}(H^1(F^\ast))= -(-2 + 2(1-g))-1=3$. So the bundles $E'$
are parametrized by a $6$-dimensional variety, and therefore $\dim Z=6$ and $\codim Z=3$.

Now let us see that $\dim \pi_1^{-1}(Z) \,\leq\, 7$. Let $E\in Z$ and
$F$
as in (\ref{eqn:ww}), and note that the determinant of $F$ is fixed.
Recalling that $\dim H^1(F^\ast)=4$, we see that
we have to check that
 $$
 \dim \cF  + 3 + \dim H^0(E) -1 \,\leq\, 7\, ,
 $$
where $\cF$ is the family of the bundles $F$.
 Now $\dim H^0(E)=\dim H^1 (E)+1=\dim H^0(E')+1\leq \dim H^0(F)+2$. Hence we
only need to show that
 \begin{equation}\label{eqn:inequality}
 \dim \cF_i + \dim H^0(F)\,\leq\, 3\, ,
 \end{equation}
for $F\in \cF_i$, where $\cF=\bigsqcup \cF_i$ is the family (suitably stratified)
of the possible bundles $F$.

We have the following possibilities:
\begin{enumerate}
\item $F=L_1\oplus L_2$, where $L_1,L_2$ are line bundles of degree one,
$L_2=\det(F)\otimes L_1^{-1}$.
The generic such $F$ moves in a $2$-dimensional family, and $H^0(F) =0$.
If $\dim H^0(F)\neq 0$, then it should be either $L_1=\cO(p)$ or
$L_2=\cO(q)$, $p,q\,\in \, X$. In this case $F$ moves in a $1$-dimensional family,
and $\dim H^0(F)\leq 2$, so (\ref{eqn:inequality}) holds.

\item $F$ is a non-trivial extension $L\to F\to L$, where $L$ is a line bundle of
degree one. As $\det(F)=L^2$ is fixed,
then there are finitely many possible $L$. Now $\dim \Ext^1(L,L)=2$, so the bundles $F$
move in a $1$-dimensional family. Again
$\dim H^0(F)\leq 2$, so (\ref{eqn:inequality}) is
satisfied.

\item $F$ is a non-trivial extension $L_1\to F\to L_2$, where $L_1,L_2$ are non-isomorphic line
 bundles of degree one. As $\dim \Ext^1(L_2,L_1)=1$, we have that $F$ moves in $2$-dimensional
 family. If $\dim H^0(F)=1$ then (\ref{eqn:inequality}) holds. Otherwise, it must be $L_1=\cO(p)$ and
 $L_2=\cO(q)$, hence $F$ moves in a $1$-dimensional family and $\dim H^0(F)\leq 2$. So
 (\ref{eqn:inequality}) holds again.

\item $F$ a rank $2$ stable bundle and $H^0(F)=0$. This is clear, since $\dim M(2,\Lambda)=3$.

\item $F$ a rank $2$ stable bundle and $H^0(F)=1$. Then we have an exact sequence
$\cO\to F\to L$, where $L$ is a (fixed) line bundle of degree two. As $\dim H^1(L^\ast)=3$,
we have that $F$ moves in a $2$-dimensional family and (\ref{eqn:inequality}) holds.

\item $F$ a rank $2$ stable bundle, $\cO\to F\to L$, $\dim H^0(L)=1$ and $\dim H^0(F)=2$.
The connecting map $H^0(L)={\mathbb C} \to H^1(\cO)$ is given by multiplication by
the extension class in $H^1(L^\ast)$ defining $F$. To have $\dim H^0(F)=2$, this connecting
map must be zero, hence the extension class is in $\ker (H^1(L^\ast) \to H^1(\cO))$. This
kernel is one-dimensional (since the map is surjective). So the family of such $F$ is zero-dimensional,
and (\ref{eqn:inequality}) is satisfied.

\item $F$ a rank $2$ stable bundle, $\cO\to F\to L$, $\dim H^0(L)=2$ and $\dim H^0(F) \geq 2$.
Now it must be $L=K_X$. The connecting map
  $$
  c_\xi: H^0(K_X) \to H^1(\cO)=H^0(K_X)^*
  $$
is given by multiplication with the extension class $\xi$ in $H^1(L^\ast)=H^0(K_X^2)^*$ defining
$F$. So $c_\xi \in \mathrm{Hom}(H^0(K_X), H^0(K_X)^*)=H^0(K_X)^* \otimes  H^0(K_X)^*$ is
the image of $\xi$ under $H^0(K_X^2)^* \to H^0(K_X)^* \otimes  H^0(K_X)^*$. But this
map is the inclusion $H^0(K_X^2)^*=\text{Sym}^2 H^0(K_X)^* \subset \bigotimes^2 H^0(K_X)^*$.
This means that $c_\xi \in \text{Sym}^2 H^0(K_X)^*$.

If $\dim H^0(F)=2$, then $c_\xi$ is not an isomorphism.
The condition $\det(c_\xi)=0$ gives a $2$-dimensional family of $\xi\in H^1(L^\ast)$. So
the family of such bundles $F$ is one-dimensional and (\ref{eqn:inequality}) is satisfied.
If $\dim H^0(F)=3$, then $c_\xi=0$, and so $\xi=0$, which is not possible (since $F$
does not split).
\end{enumerate}

Finally, we tackle the case $(r,g,d)=(3,2,6)$. Now $\cU_m(3,\Lambda)\to M(3,\Lambda)$
is a projective fibration (with fibers $\PP^2$),
so Corollary \ref{cor1} and the exact sequence (\ref{e0})
imply that $\Br(\cU_m(3,\Lambda))=0$. To complete the proof
that $\Br(\lM_{\tau_m^+}(3,\Lambda))=0$, it only remains to show that
$\codim \cS_{\tau_m^+} \geq 2$.

Consider the family $\cF$ of strictly semistable bundles $E$
with $(E,\phi)\in \cS_{\tau_m^+}$.
We stratify $\cF=\bigsqcup \cF_j$, such that $\dim H^0(E)$
is constant on each $\cF_j$. We have to prove that
 $$
 \dim \cF_j + \dim H^0(E)-1 \leq \dim \lM_{\tau_m^+}(3,\Lambda)-2=10-2=8.
 $$
For $E$ strictly semistable, we have either an exact sequence $L\to E\to F$ or $F\to E\to L$, where
 $L\in \Jac^2 X$, and $F$ is a semistable bundle of rank $2$ and determinant
 $\Lambda'=\Lambda \otimes L^{-1}$ (which
 is of degree $4$). Both cases are similar, so we assume the first one. There are three
 possibilities:
 \begin{enumerate}
 \item Suppose that $\dim \Hom(F,L)=0$. Then $\dim \Ext^1(F,L)=2$.
 We stratify $\Jac^2 X$ depending on $\dim H^0(L)$. For $L\neq K_X$, $\dim H^0(L)=1$;
  for $L=K_X$, $\dim H^0(L)=2$. So
 for each stratum $\cF' \subset \Jac^2 X$,
 we have that $\dim \cF' + \dim H^0(L) \leq 3$. We also stratify the
 family of rank $2$ semistable bundles $F$, according to $\dim H^0(F)$. For any such
 stratum $\cF''$, we have that
  \begin{equation}\label{eqn:extra-final}
  \dim \cF'' + \dim H^0(F) -1 \leq 4\, .
  \end{equation}
 Assuming (\ref{eqn:extra-final}), and noting
 that $\dim H^0(E)\leq \dim H^0(L)+\dim H^0(F)$, we have that,
 for the corresponding stratum $\cF_0$,
  $\dim \cF_0 + \dim H^0(E)-1 \leq 3 + 4 + 2-1=8$.

 Let us prove (\ref{eqn:extra-final}).
 For $F$ stable, we have that  $\dim \cF'' + \dim H^0(F) -1 \leq \dim \lM_{\tau_m^+}(2,\Lambda') = 4$.
 For $F$ strictly semistable, there is an exact sequence $L' \to F\to \Lambda'\otimes L'^{-1}$,
 with $L'\in \Jac^2 X$. If $L'$ is generic, then $\dim H^0(L')=\dim H^0(\Lambda'\otimes L'^{-1})=1$
 and $\dim \Ext^1( \Lambda'\otimes L'^{-1},L')=1$. So $\dim \cF'' + \dim H^0(F) -1 \leq 2 +2-1=3$.
 For $L'=K_X$, $\Lambda'\otimes L'^{-1}=K_X$ or $L'^2= \Lambda'$, we have the bounds
 $\dim H^0(L')\leq 2$, 
 $\dim H^0(\Lambda'\otimes L'^{-1})\leq 2$ and $\dim \Ext^1( \Lambda'\otimes L'^{-1},L')\leq 2$,
 giving that $\dim \cF'' + \dim H^0(F) -1 \leq 1 +4-1=4$.

 \item Suppose that $\dim \Hom(F,L)=1$. Then $\dim \Ext^1(F,L)=3$.
 There is an exact sequence $\Lambda\otimes L^{-2}\to F \to L$.
 If $\dim H^0(L)= 1$ and $\dim H^0(\Lambda\otimes L^{-2})=1$ then
 $\dim H^0(E)=3$. This case is covered by Lemma \ref{lem:codim-semistable}.
 Otherwise $L= K_X$ or $\Lambda\otimes L^{-2} =K_X$, so there are
 finitely many choices for $L$. 
 Using that $\dim H^0(E)\leq 6$ and
 $\dim \Ext^1(L, \Lambda\otimes L^{-2})\leq 2$, we get that, for the corresponding stratum $\cF_1$,
 it is
 $\dim \cF_1 + \dim H^0(E)-1 \leq 1+ 2 + 6-1 =8$.

 \item Suppose that $\dim \Hom(F,L)=2$. Then
  $F=L\oplus L$ and $\dim \Ext^1(F,L)=4$. The extension is unique, because
  the group of automorphisms of such $F$ is of dimension $4$. Note also that 
  there are finitely many choices for $L$. 
So for the corresponding family $\cF_2$, we have
  $\dim \cF_2 + \dim H^0(E)-1 \leq 6-1 =5$.
\end{enumerate}

This completes the proof of the corollary.

\end{proof}

\begin{rmk}\label{rem}
Note that $\Br(\cU_M(r,\Lambda))=0$ for $(r,g,d)\neq (3,2,\text{even})$
(use Corollary \ref{cor:extra} and Proposition \ref{prop:Um-UM}).

Also, if $d> r(2g-2)$, then
$\Br(\cU_m(r,\Lambda))=0$ for $(r,g,d)\neq (2,2,\text{even})$
(use Corollary \ref{cor:extra} and Proposition \ref{prop:Um-UM}).
Actually, in the range $d > r(2g-2)$, the proof of Theorem \ref{theorem1} shows that
$\Br(\cU_m(r,\Lambda))=\Br(\cU_M(r+1,\Lambda))$, for any $(r,g,d)$.
\end{rmk}

\begin{rmk}
Our techniques for proving Theorem \ref{main1} do not cover the case $(r,g,d)=(3,2,2)$.
So this case remains open at the moment.

This is due to the following. Working as in the proof of Corollary \ref{cor:extra}, in
the case $(r,g,d)=(3,2,2)$, we could try two approaches. First, we could look at the map
$\pi_1: \cU_m(3,\Lambda) \to M(3,\Lambda)$. We see that whereas $\dim \cU_m(3,\Lambda)=
d+(r^2-r-1)(g-1)-1= 6$, it is $\dim M(3,\Lambda)=(r^2-1)(g-1)=8$. Therefore $\pi_1$
is generically an immersion, and there is not much hope to recover the Brauer
group of $\cU_m(3,\Lambda)$ out of that of $M(3,\Lambda)$.

Second, we could look at the map $\pi_2: \cU_M(3,\Lambda) \to M(2,\Lambda)$, which is a projective
fibration with fiber $\PP^3$.
The moduli space of $S$-equivalence classes of
semistable bundles  $\overline{M}(2,\Lambda)$ is isomorphic (for $g=2$, $d\equiv 0 \pmod2$)
to $\PP^3$  (see \cite{NR}). The
locus of properly semistable bundles $Z\subset \PP^3$ is a Kummer variety: $Z=\Jac^1 X/\ZZ_2$,
whose elements are of the form $E=L\oplus (L^{-1}\otimes\Lambda)$, $L\in \Jac^1 X$.
Then $\codim Z=1$, so we do not get the vanishing of the Brauer group of
$M(2,\Lambda)=\overline{M}(2,\Lambda) \setminus Z$.

We can still try to study the map $\pi_2$ over a larger open subset of $\overline{M}(2,\Lambda)$, recalling
that $\pi_2$ extends to a map $\pi_2: \lM_{\tau_M^-}(3,\Lambda) \to \overline{M}(2,\Lambda)$.
We denote $\tilde Z=\pi_2^{-1}(Z)$.
Consider a pair $(E,\phi)\in \tilde Z$. Then $\cO \to E\to F$, where $F$ is a semistable rank $2$ bundle.
So $F$ sits in an exact sequence $L\to F \to L^{-1}\otimes \Lambda$, where $L \in \Jac^1 X$.
Then $\dim \Ext^1(L^{-1}\otimes \Lambda ,L)=1$ if $L^2\not\cong  \Lambda$,
and $\dim \Ext^1(L^{-1}\otimes \Lambda ,L)=2$ if $L^2\cong \Lambda$.
The family of \emph{non-split} properly semistable bundles is then parametrized
by $P_1=Bl_{{\mathrm{Fix}}\, \tau}  (\Jac^1 X)$, the blow-up of $\Jac^1 X$ at the fixed points of the
involution $\tau: L\mapsto L^{-1}\otimes \Lambda$. There is an obvious map
$q: P_1 \to Z$. The family of \emph{split} semistable bundles is
parametrized by $P_2 \cong Z$.  Now consider the embedding $\imath:X\hra
\Jac^1 X$, given as $p\mapsto {\cO}(p)$. This produces maps
$\imath_1:X\hra P_1$ and $\imath_2:X \to P_2$. Then for any
$L \in \big( P_1 \setminus \imath_1(X)\cup \tau(\imath_1(X)) \big) \sqcup \big( P_2 \setminus \imath_2(X) \big)$,
we have that $H^0(F)=0$ and $\dim H^1(F)=4$. As a conclusion, if $F_0=L\oplus (L^{-1}\otimes
\Lambda) \in Z \setminus  q\circ \imath_1(X) \subset \overline{M}(2,\Lambda)$, then
the fiber
  $$
  \pi_2^{-1}(F_0) \cong \PP^3 \sqcup \PP^3\sqcup (\PP^1\times \PP^1),
  $$
where the first $\PP^3$ corresponds to the space of sections of $F$ for the non-trivial extension
$L\to F \to L^{-1}\otimes \Lambda$, the second $\PP^3$  corresponds to the space of sections of $F$ for the non-trivial extension
$L^{-1}\otimes  \Lambda \to F \to L$, and the $\PP^1\times \PP^1$ corresponds to
the sections of $F_0=L\oplus (L^{-1}\otimes
\Lambda)$ (taking the quotient by the automorphisms of the bundle).

This means that the map $\pi_2$ is not a projective fibration over any open subset larger
than $M(2,\Lambda)\subset \overline{M}(2,\Lambda)$, ruling out any hope of determining the
Brauer group of $\lM_{\tau_M^-}(3,\Lambda)$ without determining it for $M(2,\Lambda)$ first.
\end{rmk}

\end{document}